\providecommand\@dotsep{5}\def\listtodoname{List of Todos}\def\listoftodos{\hypersetup{linkcolor=black}\@starttoc{tdo}\listtodoname\hypersetup{linkcolor=blue}}\makeatother
\newtheorem{lemma}{Lemma}
\newtheorem{theorem}{Theorem}
\newtheorem{corollary}{Corollary}
\newtheorem{remark}{Remark}
\def\R{\mathbb R}
\def\Q{\mathbb Q}
\def\p{\partial}
\newcommand{\pair}[1]{\left\langle #1 \right\rangle}
\newcommand{\norm}[1]{\Vert#1\Vert}
\newcommand{\jump}[1]{\llbracket#1\rrbracket}
\newcommand{\tnorm}[1]{\vert\hspace{-0.3mm}\Vert#1\Vert\hspace{-0.3mm}\vert}
\def\H{\mathcal H}
\date{Dec 2015, last edit by Lauri, compiled \today}
\title[Data assimilation for the heat equation]{Data assimilation for the heat equation using stabilized finite element methods}
\author{Erik Burman}
\address{Department of Mathematics, University College London, Gower Street, London UK, WC1E 6BT.}
\email{e.burman@ucl.ac.uk}
\author{Lauri Oksanen}
\address{Department of Mathematics, University College London, Gower Street, London UK, WC1E 6BT.}
\email{l.oksanen@ucl.ac.uk}
\begin{document}
\begin{abstract}
We consider data assimilation for the heat equation using a finite
element space semi-discretization. The approach is optimization
based, but the design of regularization operators and parameters rely
on techniques from the theory of stabilized finite elements. The
space semi-discretized system is shown to admit a unique
solution. Combining sharp estimates of the numerical stability of the
discrete scheme and conditional stability estimates of the
ill-posed continuous pde-model we then derive error estimates that
reflect the approximation order of the finite element space and the
stability of the continuous model. Two different data assimilation situations with different stability properties are considered to illustrate the framework. Full detail on how to adapt known stability estimates for the continuous model to work with the numerical analysis framework is given in appendix.
\end{abstract}
\maketitle
\section{Introduction}
We consider two data assimilation problems for the heat equation 
\begin{align}
\label{eq_heat}
\p_t u - \Delta u = f, \quad \text{in $(0,T) \times \Omega$},
\end{align}
where $T > 0$ and $\Omega \subset \R^n$ is open. 
Let $\omega, B \subset \Omega$ be open
and let $0 < T_1 < T_2 \le T$.
Both the data assimilation problems are of the following general form: 
determine the restriction $u|_{(T_1, T_2) \times B}$ of a solution to the heat equation (\ref{eq_heat})
given $f$ and the restriction
$u|_{(0,T) \times \omega}$.
The crucial difference between the two problems is that in the first problem we do not make any assumptions on the boundary condition satisfied by $u$, whereas in 
the second problem we assume that it satisfies the lateral Dirichlet boundary condition $u|_{(0,T) \times \p \Omega} = 0$. We emphasize that, in both the problems, no information on the initial condition satisfied by $u$ is assumed to be known. 

For the first problem we assume the following geometry,
\begin{itemize}
\item[(A1)] $B$ is connected, $\omega \subset B$, and
the closure $\overline B$ is compact and contained in $\Omega$,
\end{itemize}
and in the second case we may choose $B = \Omega$ while assuming that
\begin{itemize}
\item[(A2)] $\overline \Omega$ is a compact, convex, polyhedral domain.
\end{itemize}

It is known that the first problem is conditionally H\"older stable
and that this stability is optimal. More precisely, the following continuum stability estimate holds.

\begin{theorem}
\label{th_cont_unstable}
Let $\omega \subset \Omega$ be open and non-empty, and let $0 < T_1 < T_2 < T$. 
Suppose that an open set $B \subset \Omega$ satisfies (A1).
Then there are $C > 0$ and $\kappa \in (0,1)$
such that for all $u$ in the space
\begin{align}
\label{energy_space_nobc}
H^1(0,T; H^{-1}(\Omega)) \cap L^2(0,T; H^1(\Omega)),
\end{align}
it holds that 
\begin{align*}
&\norm{u}_{L^2(T_1, T_2; H^1(B))} 
\le C (\norm{u}_{L^2((0, T) \times \omega)} + 
\norm{L u}_{(0,-1)})^\kappa (\norm{u}_{L^2((0, T) \times \Omega)} + 
\norm{L u}_{(0,-1)})^{1-\kappa},
\end{align*}
where $L = \p_t - \Delta$ and $\norm{\cdot}_{(0,-1)} = \norm{\cdot}_{L^2(0, T; H^{-1}(\Omega))}$.
\end{theorem}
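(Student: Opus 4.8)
The plan is to reduce the global estimate to a local quantitative unique continuation inequality for $L=\p_t-\Delta$ and then propagate it across $B$. Because assumption (A1) guarantees that $\overline B$ is compact and contained in $\Omega$, the whole argument takes place in the interior of the space--time cylinder $(0,T)\times\Omega$, so the (unspecified) lateral behaviour of $u$ never enters; the only mechanism available is interior unique continuation for the parabolic operator. Since $L$ is non-characteristic in every spatial direction, smallness of $u$ should propagate in space from $\omega$ to all of $B$, and the Hölder structure of the conclusion is precisely what one obtains by iterating a parabolic analogue of the elliptic three-ball inequality.

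First I would establish a local three-cylinder inequality. Fixing concentric spatial balls $B_{r_1}(x_0)\Subset B_{r_2}(x_0)\Subset B_{r_3}(x_0)\Subset\Omega$ together with a time interval, I would invoke a Carleman estimate for $L$ with a weight of the standard parabolic type, radially decreasing in the spatial variable and concentrated in time about a central instant so that the whole interval is treated at once. Testing it against a cutoff localized to the middle cylinder and optimizing the large parameter yields an estimate of the schematic form
\[
\norm{u}_{L^2(\text{middle})}\le C\,\bigl(\norm{u}_{L^2(\text{inner})}+\norm{Lu}\bigr)^{\theta}\bigl(\norm{u}_{L^2(\text{outer})}+\norm{Lu}\bigr)^{1-\theta},
\]
with $\theta\in(0,1)$ depending only on the radii. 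The subtle point is the source term: since $u$ lies only in the energy space, the natural norm of $Lu=f$ is $\norm{f}_{L^2(0,T;H^{-1}(\Omega))}$ rather than an $L^2(L^2)$ norm, so I would write $f=\div F$ plus a lower-order part and integrate by parts in the Carleman identity, so that only first spatial derivatives of the weight meet $F$; a mollification reduces the low regularity of $u$ to the smooth case where the pointwise identity is valid.

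Next comes propagation of smallness. Since $B$ is connected, open, and relatively compact in $\Omega$, and $\omega\subset B$ is open and non-empty, I can join a fixed ball inside $\omega$ to any point of $B$ by a path staying in a slightly larger relatively compact open subset of $\Omega$, and cover that path by a finite chain (finiteness by compactness) of ball triples of the above type, all contained in $\Omega$. Composing the three-cylinder inequalities along the chain, the inner factor is controlled throughout by the $\omega$-observation and the outer factor by the global $\Omega$-norm, while the Hölder exponents multiply to a single $\kappa\in(0,1)$ determined by the geometry; this gives the stated two-factor estimate, for now with an $L^2$ spatial norm on the left.

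Finally I would upgrade the left-hand side to $L^2(T_1,T_2;H^1(B))$ and fix the time window. Choosing $T_1>0$ and $T_2<T$ leaves room to place the time supports of the Carleman weights strictly inside $(0,T)$, which is exactly why the margins $0<T_1<T_2<T$ are needed, the all-of-$(0,T)$ observations in $\omega$ and $\Omega$ supplying the data. The spatial $H^1$ gain follows from an interior Caccioppoli (energy) estimate: on a slightly smaller region the $H^1$ norm of $u$ is bounded by its $L^2$ norm on a larger region plus $\norm{Lu}$, and since $\overline B\Subset\Omega$ such room is always available. The main obstacle, in my view, is carrying out the first step at the regularity of the energy space: making the Carleman machinery work with the source measured only in $L^2(0,T;H^{-1})$ and with $u$ merely in $H^1(0,T;H^{-1}(\Omega))\cap L^2(0,T;H^1(\Omega))$, which forces the divergence-form/duality treatment of $f$ and a careful mollification in place of the cleaner $L^2(L^2)$ setting in which parabolic three-cylinder inequalities are usually stated.
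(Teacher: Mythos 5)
Your proposal has the same overall architecture as the paper's proof: an interior Carleman estimate (the paper uses Isakov's, with weight $e^{\alpha\rho}$, $\rho(t,x)=\rho_1(x)+\rho_2(t)$ combining a radially decreasing spatial part with a time cutoff) yielding a three-cylinder inequality; propagation of smallness from $\omega$ across $B$ by a finite chain of balls, with the H\"older exponents multiplying; cutoff-plus-energy (Caccioppoli) estimates to trade between $H^1$ and $L^2$ norms; and an approximation argument to reach the energy space. The genuine difference is in the step you yourself single out as the main obstacle, the $H^{-1}$ source term. You propose to build $f=Lu$ directly into the Carleman inequality by writing $f=\div F$ plus a lower-order part and integrating by parts against the weight, with mollification to justify the computation at energy-space regularity; this is viable (it is the negative-order Carleman route in the style of Imanuvilov--Yamamoto) but forces you to reprove the Carleman estimate with a duality right-hand side and to track the extra powers of the large parameter. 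The paper sidesteps this entirely by a splitting trick: with $\eta$ a time cutoff vanishing near $t=0$, it solves the well-posed auxiliary problem $Lw=\eta Lu$ on a slightly larger ball with zero initial and lateral boundary data, and applies the three-cylinder inequality to $v=u-w$, which satisfies $Lv=0$ on the relevant time interval, so the Carleman machinery is only ever used in the classical source-free setting; the correction $w$ is controlled by the parabolic energy estimate, which bounds it by $C\norm{Lu}_{(0,-1)}$, and low regularity is handled at the very end by density of smooth functions in the energy space, both sides of the final estimate being continuous in that norm. Your route buys a stronger intermediate result (a three-cylinder inequality valid directly for $H^{-1}$ sources); the paper's route buys a substantially lighter proof. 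One further point: the commutator $[L,\chi]u$ produced by the spatial cutoff makes the natural observation norms in the three-cylinder inequality $H^1$ rather than $L^2$ norms, so the Caccioppoli estimate you invoke to upgrade the left-hand side is also needed to downgrade the right-hand side observations to $\norm{u}_{L^2((0,T)\times\omega)}$ and $\norm{u}_{L^2((0,T)\times\Omega)}$, exactly as in the paper.
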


Here the factor with the power $\kappa$ can be viewed as the size of the data 
\begin{align}
\label{the_data}
q = u|_{(0, T) \times \omega}, \quad f = Lu, 
\end{align}
and the norm $\norm{u}_{L^2((0, T) \times \Omega)}$
as an apriori bound for the unknown $u$.
Let us also emphasize that the assumption $\overline B \subset \Omega$ is essential, indeed, if $\overline B \cap \p \Omega \ne \emptyset$ then the optimal stability is conditionally logarithmic.
We outline the continuum stability theory in the appendix below. The second problem is stable.

\begin{theorem}
\label{th_cont_stable}
Let $\omega \subset \Omega$ be open and non-empty, and let $0 < T_1 < T$. 
Suppose that (A2) holds.
Then there is $C > 0$ such that for all $u$ in the space
\begin{align}
\label{energy_space}
H^1(0,T; H^{-1}(\Omega)) \cap L^2(0,T; H_0^1(\Omega)),
\end{align}
it holds that 
\begin{align*}
&\norm{u}_{C(T_1, T; L^2(\Omega))} +
\norm{u}_{L^2(T_1, T; H^1(\Omega))} +
\norm{u}_{H^1(T_1, T; H^{-1}(\Omega))} 
\\&\quad\le C (\norm{u}_{L^2((0, T) \times \omega)} + 
\norm{L u}_{(0,-1)}).
\end{align*}
%\begin{align*}
%&\norm{u}_{L^2(T_1, T; H^1(\Omega))} 
%\le C (\norm{u}_{L^2((0, T) \times \omega)} + 
%\norm{L u}_{0,-1}).
%\end{align*}
\end{theorem}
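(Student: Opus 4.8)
The plan is to split the estimate into a single time-slice bound, obtained from a global Carleman estimate (interior observability), and a forward parabolic energy estimate that upgrades this to control of all three norms on $(T_1,T)$. The role of assumption (A2) is that the homogeneous Dirichlet condition together with the convex polyhedral geometry makes the boundary contributions in the Carleman estimate have a favourable sign (so they may be discarded) and avoids reentrant corners; no regularity beyond the energy space is otherwise needed.

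First I would invoke a Fursikov--Imanuvilov type global Carleman estimate for $L=\partial_t-\Delta$ on $(0,T)\times\Omega$ with homogeneous Dirichlet data and observation region $\omega$. Writing $\theta$ for the standard time weight blowing up at $t=0,T$, $\phi$ for the spatial weight, and $\tau,\lambda$ for the large parameters, this has the schematic form
\[
\int_{(0,T)\times\Omega}\left(\tau\theta|\nabla u|^2+\tau^3\theta^3|u|^2\right)e^{2\tau\phi}\,dx\,dt\le C\left(\int_{(0,T)\times\Omega}|Lu|^2 e^{2\tau\phi}\,dx\,dt+\int_{(0,T)\times\omega}\tau^3\theta^3|u|^2 e^{2\tau\phi}\,dx\,dt\right).
\]
Since the natural regularity only gives $Lu\in L^2(0,T;H^{-1}(\Omega))$, I would use the negative-norm version: decomposing $Lu=g_0+\div G$ with $g_0,G\in L^2$, the term $\int|Lu|^2 e^{2\tau\phi}$ is replaced by $\int(|g_0|^2+\tau^2\theta^2|G|^2)e^{2\tau\phi}$, and optimising over the decomposition reproduces $\norm{Lu}_{(0,-1)}$. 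On any compact time-subinterval the weight $\tau^3\theta^3 e^{2\tau\phi}$ is bounded below, so restricting the left-hand side to $(a,T_1)\times\Omega$ with $0<a<T_1$ gives $\int_a^{T_1}\norm{u(t)}_{L^2(\Omega)}^2\,dt\le C(\norm{u}_{L^2((0,T)\times\omega)}^2+\norm{Lu}_{(0,-1)}^2)$, and by the mean value theorem there is $s\in(a,T_1)$ with $\norm{u(s)}_{L^2(\Omega)}$ bounded by the same right-hand side.

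It remains to propagate this slice forward. Pairing the equation with $u$ and using $\pair{Lu,u}\le\norm{Lu}_{H^{-1}}\norm{u}_{H^1_0}$ together with Poincar\'e's inequality yields $\frac{d}{dt}\norm{u}_{L^2(\Omega)}^2+\norm{\nabla u}_{L^2(\Omega)}^2\le C(\norm{Lu}_{H^{-1}}^2+\norm{u}_{L^2(\Omega)}^2)$, and Gr\"onwall's inequality on $[s,T]\supset[T_1,T]$ controls both $\norm{u}_{C(T_1,T;L^2(\Omega))}$ and $\norm{u}_{L^2(T_1,T;H^1(\Omega))}$ by $\norm{u(s)}_{L^2(\Omega)}+\norm{Lu}_{(0,-1)}$. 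Finally, $\partial_t u=\Delta u+Lu$ bounds $\norm{u}_{H^1(T_1,T;H^{-1}(\Omega))}$ in terms of the $L^2(T_1,T;H^1)$ estimate just obtained and $\norm{Lu}_{(0,-1)}$, which combined with the slice bound completes the proof. The main obstacle is the first step: establishing the global Carleman/observability estimate with an $H^{-1}$ source on a convex polyhedral, hence non-smooth, domain, in particular controlling the corner contributions and checking the sign of the Dirichlet boundary terms; the forward energy argument is routine.
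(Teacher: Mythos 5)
Your overall architecture --- a global Carleman estimate whose weight uses convexity to give the sign condition $\p_\nu \rho \le 0$ on $\p\Omega$ (from $(x-x_0)\cdot\nu \ge 0$), hence discardable boundary terms, followed by a time-slice bound and forward energy propagation --- is the same as the paper's, and your forward step is essentially the paper's energy estimate (\ref{energy_est_heat}). The genuine gap is the one you flag yourself but do not resolve: you apply the Carleman estimate directly to $u$, whose source $Lu$ lies only in $L^2(0,T;H^{-1}(\Omega))$, and therefore you must invoke a ``negative-norm'' Carleman estimate in which $\int |Lu|^2 e^{2\tau\phi}$ is replaced by $\int (|g_0|^2 + \tau^2\theta^2 |G|^2)e^{2\tau\phi}$ for a decomposition $Lu = g_0 + \div G$. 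This is not a routine corollary of the standard estimate: one must rerun the Carleman machinery integrating by parts against the divergence part and absorb the resulting $\tau$-weighted cross terms, and the results of this type in the literature (Imanuvilov--Yamamoto) are themselves substantial and are proved for smooth domains; you would additionally need to push such an estimate through on a convex polyhedron. As written, the hardest step of your proof is an unproved assertion, so the proposal reduces the theorem to a statement that is at least as hard as the theorem itself.

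The paper avoids this issue entirely, and that is the idea missing from your attempt: split $u = v + w$, where $v$ solves $Lv = f$, $f := Lu$, with $v|_{t=0} = 0$ and lateral Dirichlet data. The standard parabolic energy estimate (\ref{energy_est_heat}) controls $v$ by $\norm{f}_{(0,-1)}$ with no Carleman input at all, while $w = u - v$ solves the \emph{homogeneous} equation $Lw = 0$, so the ordinary $L^2$-source Carleman estimate of Lemma \ref{th_carlaman_est} (with the weight of Lemma \ref{lem_dist}, which is where the polyhedral geometry enters) applies to $w$ with the troublesome source term simply absent; the observation term $\norm{w}_{L^2((0,T)\times\omega)}$ is then converted back to $\norm{u}_{L^2((0,T)\times\omega)} + \norm{f}_{(0,-1)}$ using the bound on $v$. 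If you insert this decomposition before your Carleman step, the rest of your argument --- mean-value slice, Gr\"onwall forward in time, recovery of the $H^1(T_1,T;H^{-1}(\Omega))$ norm from the equation --- goes through essentially verbatim.
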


%Here the assumption $T_1 > 0$ is essential, otherwise the optimal stability is conditionally logarithmic.
%\HOX{Removed here a small remark on the case $T_1 = 0$ as I could not find a good reference.}

In what follows, we call the two data assimilation problems the unstable and the stable problem, respectively.
We use the shorthand notation 
\begin{align*}
a(u,z) = (\nabla u, \nabla z), 
\quad
G_f(u,z) = (\p_t u, z) + a(u,z) - \pair{f,z},
\quad 
G = G_0,
\end{align*}
where $(\cdot,\cdot)$ is the inner product of $L^2((0,T) \times \Omega)$ and
$\pair{\cdot,\cdot}$ is the dual pairing between 
$L^2(0,T; H^{-1}(\Omega))$ and $L^2(0,T; H_0^1(\Omega))$.
Note that for $u \in H^1((0,T) \times \Omega)$, the equations
\begin{align}
\label{weak_lapl_eq}
G_f(u,z) = 0, \quad z \in L^2(0,T; H_0^1(\Omega)),
\end{align}
give the weak formulation of $\p_t u- \Delta u = f$.
Our approach to solve the data assimilation problem, in both the cases, is based on minimizing the Lagrangian functional
\begin{align}
\label{Lagrangian}
\L_{q,f}(u,z) = \frac12 \|u - q\|_{\omega}^2 +
 \frac12 s(u,u) - \frac12 s^*(z,z),
+ G_f(u,z),
\end{align}
where the data $q$  
and $f$ are fixed.
Here 
$\norm{\cdot}_\omega$ is the norm of $L^2((0,T) \times \omega)$,
and $s$ and $s^*$ are symmetric bilinear forms, that are chosen differently in the two cases, and that we call the primal and dual stabilizers, respectively.
Note that minimizing $\L_{q,f}$ can be seen as fitting 
$u|_{(0,T) \times \omega}$ to the data $q$ under the constraint (\ref{weak_lapl_eq}), 
$z$ can be interpreted as a Lagrange multiplier,
and $s/2$ and $s^*/2$ as regularizing penalty terms.

In this paper we consider only semi-discretizations, that is,
we minimize $\L_{q,f}$ on a scale of spaces that are discrete in the spatial variable but not in the time variable. 
The spatial mesh size $h > 0$
will be the only parameter controlling the convergence of the
approximation, and we use piecewise affine finite elements. 

%\HOX{I edited this a bit. We can revert back to the prev. version if you don't agree with the changes}
An important feature of the present work is that the choice of the
regularizing terms is driven by the analysis and designed to give
error estimates %for clean data 
that reflect the approximation
properties of the finite element space and the stability of the continuous model. 
In particular, when the continuous model is Lipschitz stable, %for energy norm 
we obtain optimal error estimates in the usual sense in
numerical analysis, that is, the estimates are optimal compared to interpolation of the exact solution. This is the case for the
second model problem introduced above.
The regularization is constructed on the discrete level,
that is, $s$ and $s^*$ are defined on the semi-discrete spaces, and in some cases, they may not even make sense on the continuous level.

We show how the different stability properties of the
two model problems lead to different regularization
operators. The choice for a given problem is not unique, and 
the design of regularizing terms leading to optimal estimates can also be driven by computational considerations, such as computational cost or
couplings in the discrete formulations.
%The approach presented here is general and can be applied to a large class of problems and finite element methods, 
Therefore we present an abstract framework for the design of regularization operators. 
When measurement errors are present we also show how to quantify the damping of
perturbations.

For the unstable problem,
under suitable choices of the semi-discrete spaces
and the regularization, we show that (\ref{Lagrangian})
has a unique minimizer $(u_h, z_h)$, $h > 0$, 
satisfying the following convergence rate
\begin{align}
\label{convrate_informal}
\norm{u_h - u}_{L^2(T_1, T_2; H^1(B))} 
&\le C h^\kappa (\|u\|_{*} + \|f\|),
\end{align}
where $u$ is the unique solution of the continuum data assimilation problem, $\kappa \in (0,1)$ is the constant in Theorem \ref{th_cont_unstable}, and the norms on the right-hand side capture the assumed apriori smoothness, see Theorem \ref{th_main_unstable} below for the precise formulation. 
For the stable problem, we establish an analogous result but with $\kappa = 1$, that is, the convergence rate reflects now the stability estimate in Theorem \ref{th_cont_stable}.
Moreover, in the stable case, we can replace the norm on the left-hand side of (\ref{convrate_informal}) with the stronger norm in Theorem \ref{th_cont_stable}, see Theorem \ref{th_main_stable_abs} below for the precise formulation. 

\subsection{Previous literature}
The classical approach to data assimilation and inverse problems is to
introduce Tikhonov regularization on the continuous level \cite{TA77,Bau87,EHN96}
and then discretize the well-posed regularized model. The use of
Carleman estimates for uniqueness of inverse problems and the
quantification of stability was first proposed in \cite{BK81}. The use of conditional
stability to choose a Tikhonov regularization parameter has been explored in
\cite{CY00}. Nevertheless a common feature of all methods where
regularization takes place on the continuous level is that the
accuracy of the reconstruction will be determined by the
regularization error, leaving no room to exploit the interplay
between discretization and regularization. 

The method we propose is an instance of the so-called 4DVAR method
\cite{DT86}. The analysis in the present paper, which draws on
previous ideas in the stationary case \cite{Bu13,Bu14, BHL16}, is to
our best knowledge the first complete numerical analysis of a 4DVAR
type method. The main characteristic of our approach is to separate
the numerical stability and the stability of the continuous model, and
use the continuous stability estimate in the perturbation analysis.

In contrast, another recent approach to parabolic data assimilation
problems is to derive Carleman estimates directly on the discrete
scheme \cite{BHR10,BHR11}, which may then be used for convergence
analysis. Other methods for data assimilation include the so called
back and forth nudging \cite{Auroux2005} and methods using
null-controllability \cite{Puel2009, GOP11}. In the former case,
forward and backward solves are combined with filtering techniques (or
penalty) to drive the approximate solution close to data, and the
latter case leads to an approximation algorithm which uses auxiliary
optimal control problems. For a discussion of different approaches to
numerical approximation of control and inverse problems we
refer to the monograph \cite [Chapter 5]{Ervedoza2013}.

\section{Spatial discretization}

We begin by observing that we may assume without loss of generality that $\Omega \subset \R^n$ is a compact and connected polyhedral domain also in the case (A1). Indeed, we can choose a compact and connected polygonal domain $\tilde \Omega \subset \Omega$ such that $\overline B \subset \tilde \Omega$
and then replace $\Omega$ by $\tilde \Omega$.

Consider a family $\mathcal T = \{\mathcal{T}_h;\ h > 0\}$ of triangulations of $\Omega$ consisting of simplices %$\{K\}$
such that the intersection of any two distinct simplices is
either a common vertex, a common edge or a common face.
Moreover, assume that the family $\mathcal{T}$ is quasi uniform,
see e.g. \cite[Def. 1.140]{Ern2004}, and
indexed by 
\def\diam{\mbox{diam}}
$$
h = \max_{K \in \mathcal{T}_h} \diam(K).
$$ 
The family $\mathcal{T}$
satisfies the following trace inequality, see e.g. \cite[Eq. 10.3.9]{BS08},
\begin{equation}\label{trace_cont}
\|u\|_{L^2(\partial K)} \leq C (h^{-\frac12} \|u\|_{L^2(K)} + h^{\frac12}
\|\nabla u\|_{L^2(K)}), \quad u \in H^1(K),
\end{equation}
where the constant $C$ is independent of $K \in \mathcal T_h$ and $h>0$.

Let $V_h$ be the $H^1$-conformal approximation space based on the $\mathbb{P}_1$ finite element, that is, 
\begin{align}
\label{def_Vh}
V_h = \{u \in C(\bar \Omega):\ u\vert_K \in \mathbb{P}_1(K),\ 
%\quad\forall 
K \in \mathcal{T}_h\},
\end{align}
where $\mathbb{P}_1(K)$ denotes the set of polynomials of degree less
than or equal to $1$ on $K$. 
We recall that the family $\mathcal{T}$
satisfies the following discrete inverse inequality,
see e.g. \cite[Lem. 1.138]{Ern2004},
\begin{equation}
\label{inverse_disc}
\|\nabla u\|_{L^2(K)} \leq C
h^{-1} \|u\|_{L^2(K)} , \quad u \in \mathbb{P}_1(K),
\end{equation}
where the constant $C$ is independent of $K \in \mathcal T_h$ and $h>0$.

We choose an interpolator
$$
\pi_h : H^1(\Omega) \to V_h, \quad h > 0,
$$
that satisfies the following stability and approximation properties
\begin{align}
\label{pi_h_stab}
\norm{\pi_h u}_{H^1(\Omega)}
&\le C \norm{u}_{H^{1}(\Omega)}, &u\in H^{1}(\Omega),\ h > 0,\
\\\label{pi_h}
\norm{u - \pi_h u}_{H^m(\Omega)}
&\le C h^{k-m}\norm{u}_{H^{k}(\Omega)}, &u\in H^{k}(\Omega),\ h > 0,\
\end{align}
where $k=1,2$ and $m = 0, k-1$, 
and that preserves vanishing Dirichlet boundary conditions, that is, 
using the notation $W_h = V_h \cap H_0^1(\Omega)$,
\begin{align}
\label{pi_bc}
\pi_h : H_0^1(\Omega) \to W_h, \quad h > 0.
\end{align}
A possible choice is the Scott-Zhang interpolator \cite{Scott1990}.

\subsection{Spatial jump stabilizer}

In the unstable case, the regularization will be based on the spatial jump stabilizer that we will introduce next.
We denote by
$\mathcal{F}_h$ the set of internal faces of $\mathcal{T}_h$,
and define for $F \in \mathcal{F}_h$,
\begin{align*}
%\jump{u}_F &= u|_{K_1} n_1 + u|_{K_2} n_2, \quad u \in H^1(\Omega),
%\\
\jump{n \cdot u}_F &= n_1 \cdot u|_{K_1}  + n_2 \cdot u|_{K_2} , \quad u \in H^1(\Omega; \R^d),
\end{align*}
where $K_1, K_2 \in \mathcal{T}_h$ are the two simplices satisfying 
$K_1 \cap K_2 = F$, and $n_j$ is the outward unit normal vector of $K_j$, $j=1,2$.
We define the jump stabilizer
\def\J{\mathcal J}
\begin{align}
\label{def_s}
\J(u,u) = \sum_{F \in \mathcal{F}_h} \int_F h \jump{n \cdot
  \nabla u}_F^2 ~\mbox{d}s,
\quad u \in V_h,
\end{align}
where $\mbox{d}s$ is the Euclidean surface measure on $F$.

\begin{lemma}[Discrete Poincar\'e inequality]
%Suppose that $\Omega$ is convex.
There is $C > 0$ such that all 
$u \in V_h$, 
and $h > 0$ satisfy
\begin{align}
\label{J_poincare}
\norm{u}_{L^2(\Omega)} \le C h^{-1} (\J(u,u)^{1/2} + \norm{u}_{L^2(\omega)}).
\end{align}
\end{lemma}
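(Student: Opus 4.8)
The plan is to combine a continuous Poincaré inequality that is sensitive to the observation set $\omega$ with a discrete estimate showing that the stabilizer $\J$ controls the oscillation of the (piecewise constant) gradient $\nabla u$ about its mean, at the cost of exactly one power of $h$. First I would record the scalar tool: since $\Omega$ is bounded, connected and Lipschitz and $\omega\subset\Omega$ is nonempty and open, a standard compactness argument yields a constant $C$, independent of $h$, such that every $w\in H^1(\Omega)$ with $\int_\omega w\,dx=0$ satisfies $\norm{w}_{L^2(\Omega)}\le C\norm{\nabla w}_{L^2(\Omega)}$. (Were this false, a sequence of unit $L^2$-norm with vanishing gradient in the limit and zero mean on $\omega$ would converge to a nonzero constant with zero mean on $\omega$, a contradiction.)

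The heart of the argument is the discrete estimate
\[
\norm{\nabla u - \overline{\nabla u}}_{L^2(\Omega)} \le C h^{-1}\,\J(u,u)^{1/2},\qquad \overline{\nabla u}=\tfrac{1}{|\Omega|}\int_\Omega\nabla u\,dx .
\]
Here I would use two facts. Because $u\in V_h$ is continuous and piecewise affine, the tangential part of $\nabla u$ agrees across every internal face, so the jump of the vector field $\nabla u$ across $F$ is purely normal; hence $|\jump{\nabla u}_F|=|\jump{n\cdot\nabla u}_F|$ pointwise on $F$, and the stabilizer in fact measures the \emph{complete} gradient jump, giving $\sum_{F}h^{-1}\int_F|\jump{\nabla u}_F|^2\,ds = h^{-2}\J(u,u)$. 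Then, regarding $\nabla u$ as a broken (piecewise $\mathbb{P}_0$) field with vanishing elementwise gradient, I would invoke a discrete Poincaré inequality for piecewise $H^1$ functions, in the spirit of Brenner's Poincaré--Friedrichs inequalities, to bound $\norm{\nabla u-\overline{\nabla u}}_{L^2(\Omega)}^2$ by $C\sum_F h^{-1}\int_F|\jump{\nabla u}_F|^2\,ds=C h^{-2}\J(u,u)$. (Equivalently, one may average $\nabla u$ into a continuous $\mathbb{P}_1$ field $g$ by nodal interpolation; the Oswald estimate gives $\norm{\nabla u-g}_{L^2(\Omega)}^2\le C\J(u,u)$, while writing $\nabla g$ in terms of nodal differences of $g$, each a combination of gradient jumps, yields $\norm{\nabla g}_{L^2(\Omega)}\le C h^{-1}\J(u,u)^{1/2}$, and Poincaré for $g$ closes the bound.)

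Finally I would peel off the affine part. Set $c^*=\overline{\nabla u}$ and choose the constant $d^*$ so that the affine function $\ell(x)=c^*\cdot x+d^*$ satisfies $\int_\omega(u-\ell)\,dx=0$; put $w=u-\ell$. Applying the scalar tool to $w$ gives $\norm{w}_{L^2(\Omega)}\le C\norm{\nabla w}_{L^2(\Omega)}=C\norm{\nabla u-\overline{\nabla u}}_{L^2(\Omega)}\le C h^{-1}\J(u,u)^{1/2}$. On the fixed finite-dimensional space of affine functions the map $\ell\mapsto\norm{\ell}_{L^2(\omega)}$ is a norm (an affine function vanishing on a nonempty open set is zero), so $\norm{\ell}_{L^2(\Omega)}\le C\norm{\ell}_{L^2(\omega)}\le C(\norm{u}_{L^2(\omega)}+\norm{w}_{L^2(\Omega)})$ with $C$ depending only on $\Omega,\omega$. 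Adding the two bounds and absorbing constants via $h\le\operatorname{diam}\Omega$ gives $\norm{u}_{L^2(\Omega)}\le\norm{w}_{L^2(\Omega)}+\norm{\ell}_{L^2(\Omega)}\le C h^{-1}(\J(u,u)^{1/2}+\norm{u}_{L^2(\omega)})$, as required.

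I expect the main obstacle to be the discrete estimate: one must pin down the correct power of $h$ and justify that the normal-jump stabilizer $\J$ controls the \emph{entire} gradient oscillation. This hinges on the tangential continuity special to continuous $\mathbb{P}_1$ elements together with the broken Poincaré inequality carrying the sharp $h^{-1}$ face weighting; the continuous Poincaré step and the finite-dimensional treatment of the affine part are comparatively routine.
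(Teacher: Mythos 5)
Your proof is correct, and it is worth noting that the paper itself does not prove this lemma at all: its ``proof'' consists of citing \cite{Brenner03} for Poincar\'e inequalities for piecewise $H^1$ functions and \cite{BHL16} for the details. Your argument reconstructs exactly the route those citations point to, and all three of your key steps are sound: (i) for conforming $\mathbb{P}_1$ functions the tangential part of $\nabla u$ is continuous across interior faces, so the normal-jump stabilizer controls the full gradient jump, $|\jump{\nabla u}_F|=|\jump{n\cdot\nabla u}_F|$; (ii) Brenner's broken Poincar\'e inequality, applied componentwise to the piecewise-constant field $\nabla u$ (whose elementwise gradient vanishes), yields $\norm{\nabla u-\overline{\nabla u}}_{L^2(\Omega)}\le C h^{-1}\J(u,u)^{1/2}$, where quasi-uniformity is used to replace local face weights $h_F^{-1}$ by $h^{-1}$; (iii) the leftover affine part is controlled through $\norm{\cdot}_{L^2(\omega)}$ by norm equivalence on the fixed $(n+1)$-dimensional space of affine functions, with a constant depending only on $\Omega$ and $\omega$, hence uniform in $h$. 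Your bookkeeping of the powers of $h$ is right, and you in fact prove the slightly stronger bound $C\bigl(h^{-1}\J(u,u)^{1/2}+\norm{u}_{L^2(\omega)}\bigr)$, which implies the stated one since $h\le\operatorname{diam}\Omega$. Two details to make explicit in a polished write-up: the version of the broken Poincar\'e inequality you invoke must be the one with face terms $\sum_F h_F^{-1}\norm{\jump{v}}_{L^2(F)}^2$ and a constant depending only on shape regularity (available in \cite{Brenner03}), and the continuous Poincar\'e inequality with zero mean on $\omega$ requires $\Omega$ connected, which the paper arranges at the start of Section 2.
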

\begin{proof}
The original results on Poincar\'e inequalities for piecewise
$H^1$-functions may be found in \cite{Brenner03}. For a detailed proof
of \eqref{J_poincare} see \cite{BHL16}.
\end{proof}

\begin{lemma}\label{form_prop_lem}
There is $C > 0$ such that all 
$u \in V_h$, 
$v \in H_0^1(\Omega)$, 
$w \in H^2(\Omega)$
and $h > 0$ satisfy
\begin{align}
\label{J_lb}
(\nabla u, \nabla v)_{L^2(\Omega)} &\le 
C \J(u,u)^{1/2} \left( h^{-1} \|v\|_{L^2(\Omega)} + \|v\|_{H^1(\Omega)} \right),
\\\label{J_pi}
\J(\pi_h w, \pi_h w) &\le C h^2 \|w\|_{H^2(\Omega)}^2.
\end{align}
\end{lemma}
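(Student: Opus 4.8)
For \eqref{J_lb} the plan is to integrate by parts element-by-element and exploit that $u \in V_h$ is piecewise affine, so $\Delta u = 0$ on each simplex $K$. Writing $(\nabla u, \nabla v)_{L^2(\Omega)} = \sum_K \int_K \nabla u \cdot \nabla v\,dx$ and integrating by parts on each $K$, the volume contributions $-\int_K (\Delta u)\,v\,dx$ vanish and only the face terms $\int_{\partial K}(n_K\cdot\nabla u)\,v\,ds$ survive (the trace of $\nabla u$ on $\partial K$ exists since $u$ is piecewise polynomial). Because $v \in H_0^1(\Omega)$ has a single-valued trace across each internal face and vanishes on $\partial\Omega$, summing over $K$ collapses the face terms to $\sum_{F\in\mathcal F_h}\int_F \jump{n\cdot\nabla u}_F\, v\,ds$. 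I would then split the weight as $h^{1/2}\cdot h^{-1/2}$ and apply Cauchy--Schwarz over the faces to factor out $\J(u,u)^{1/2}=(\sum_F h\norm{\jump{n\cdot\nabla u}_F}_{L^2(F)}^2)^{1/2}$, leaving $(\sum_F h^{-1}\norm{v}_{L^2(F)}^2)^{1/2}$. The final step bounds this by passing to $\sum_K h^{-1}\norm{v}_{L^2(\partial K)}^2$ (each internal face is counted at most twice) and invoking the trace inequality \eqref{trace_cont}, which gives exactly $C(h^{-1}\norm{v}_{L^2(\Omega)}+\norm{v}_{H^1(\Omega)})$.

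For \eqref{J_pi} the key observation is that the jump itself vanishes on the exact function: since $w\in H^2(\Omega)$ gives $\nabla w\in H^1(\Omega)$ with single-valued trace, and adjacent outward normals satisfy $n_2=-n_1$, one has $\jump{n\cdot\nabla w}_F=0$ for every $F\in\mathcal F_h$. Hence $\jump{n\cdot\nabla \pi_h w}_F=\jump{n\cdot\nabla(\pi_h w-w)}_F$, and the estimate reduces to controlling the interpolation error $e=w-\pi_h w$. I would bound $\norm{\jump{n\cdot\nabla e}_F}_{L^2(F)}$ by the two one-sided contributions $\norm{\nabla e}_{L^2(F)}$ (using $|n_j|=1$), apply \eqref{trace_cont} to $\nabla e$ on each $K$ --- noting $\nabla^2 e=\nabla^2 w$ since $\pi_h w$ is affine on $K$ --- to obtain $\norm{\nabla e}_{L^2(\partial K)}^2\le C(h^{-1}\norm{\nabla e}_{L^2(K)}^2+h\norm{\nabla^2 w}_{L^2(K)}^2)$, and then sum with the weight $h$. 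This yields $\J(\pi_h w,\pi_h w)\le C(\norm{\nabla e}_{L^2(\Omega)}^2+h^2\norm{\nabla^2 w}_{L^2(\Omega)}^2)$; finally the approximation estimate \eqref{pi_h} with $k=2$, $m=1$ gives $\norm{\nabla e}_{L^2(\Omega)}\le Ch\norm{w}_{H^2(\Omega)}$, which closes the bound.

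The genuinely substantive steps are the two reductions rather than the inequalities themselves. In \eqref{J_lb} the crux is the integration-by-parts bookkeeping: justifying that the element-wise boundary terms assemble into the jump (matching traces of $v\in H^1$) while the $\partial\Omega$ terms drop out (because $v\in H_0^1$), and that the volume terms die thanks to $\Delta u=0$ on $V_h$. In \eqref{J_pi} the crux is spotting that $\jump{n\cdot\nabla w}_F=0$, which is what lets one trade the jump of $\pi_h w$ for the jump of the interpolation error and then invoke \eqref{pi_h}. Once these reductions are in place, both estimates follow by the same trace-inequality-plus-Cauchy--Schwarz pattern, with the face counting and the elementary $|n_j|=1$ bounds being entirely routine.
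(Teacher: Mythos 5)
Your proof is correct and follows essentially the same route as the paper: for \eqref{J_lb}, elementwise integration by parts (using that $u$ is affine so the volume terms vanish) to assemble the jump terms, then Cauchy--Schwarz and the trace inequality \eqref{trace_cont}; for \eqref{J_pi}, the observation that $\jump{n\cdot\nabla w}_F=0$ reduces everything to the interpolation error, handled by \eqref{trace_cont} and \eqref{pi_h}. The only difference is that you spell out the bookkeeping (vanishing boundary terms for $v\in H_0^1(\Omega)$, face counting, $\nabla^2(\pi_h w - w)=-\nabla^2 w$) that the paper leaves implicit.
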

\begin{proof}
Towards (\ref{J_lb}) we integrate by parts and recall that $u$ is an affine function on each element to obtain
\begin{align}
\label{J_lb_eq}
(\nabla u, \nabla v)_{L^2(\Omega)} =
\sum_{K \in \mathcal T_h} \int_K \nabla u \cdot \nabla v ~\mbox{d}x 
= h^{-1/2} \sum_{F \in \mathcal F_h} \int_F h^{1/2} \jump{n \cdot
  \nabla u}_F\, v ~\mbox{d}s.
\end{align}
Thus by the Cauchy-Schwarz inequality
$$
(\nabla u, \nabla v)_{L^2(\Omega)}
\le h^{-1/2} s(u,u)^{1/2} 
\left(\sum_{F \in \mathcal{F}_h} \int_F v^2 \mbox{d}s \right)^{1/2},
$$
and the inequality (\ref{J_lb}) follows by applying (\ref{trace_cont}) to the last factor.

Let us now turn to (\ref{J_pi}). As
$\jump{n \cdot \nabla w}_F = 0$, 
we have using (\ref{trace_cont})
\begin{align*}
&\int_F h \jump{n \cdot \nabla \pi_h w}_F^2 ~\mbox{d}s
=h \int_F \jump{n \cdot \nabla (\pi_h w - w)}_F^2 ~\mbox{d}s
\\
&\quad\le C \sum_{j=1,2} (
\norm{n_j \cdot \nabla (\pi_h w - w)}_{L^2(K_j)}^2 
%\\&\qquad\qquad
+ h^2 \norm{n_j \cdot \nabla (\pi_h w - w)}_{H^1(K_j)}^2).
\end{align*}
Summing over all internal faces and observing that $$\|\nabla (\pi_h
w - w)\|_{H^1(K_j)}^2 = \|\nabla (\pi_h
w - w)\|_{L^2(K_j)}^2+ \norm{\nabla w}_{H^1(K_j)}^2,$$ 
we obtain
\begin{align*}
&\sum_{F \in \mathcal{F}_h} \int_F h \jump{n \cdot \nabla \pi_h w}_F^2 ~\mbox{d}s \leq C \left(
  \norm{\nabla (\pi_h w - w)}_{L^2(\Omega)}^2 +  h^2 
  \norm{w}^2_{H^2(\Omega)}\right).
\end{align*}
The first term on the right-hand side satisfies
$$
\norm{\nabla (\pi_h w - w)}_{L^2(\Omega)}^2 
\le C h^2 \norm{w}_{H^2(\Omega)}^2,
$$
by (\ref{pi_h}).
\end{proof}

\section{The unstable problem}

There are several possible choices for the primal and dual stabilizers
$s$ and $s^*$ in (\ref{Lagrangian}), and different choices lead to numerical methods that may differ in terms of practical performance. 
To illustrate the main ideas of our approach, we begin by considering a concrete choice of the stabilizers in Section \ref{sec_model_case}, and give an abstract framework in Section \ref{sec_framework}.

In what follows, we use the shorthand notations $H^{(k,m)} = H^k(0,T; H^m(\Omega))$, 
$$
\norm{u}_{(k,m)} = \norm{u}_{H^k(0,T; H^m(\Omega))}, \quad k, m \in \R,
$$
and $H_0^{(k,m)} = H^{(k,m)} \cap L^2(0,T; H_0^1(\Omega))$. 
We recall also that $\norm{u} = \norm{u}_{(0,0)}$
and that $\norm{u}_\omega$ is the norm of $L^2((0,T) \times \omega)$.

\subsection{A model case}
\label{sec_model_case}

We choose the following primal and dual stabilizers
\begin{align}
\label{stab_simple}
s(u,u) = \int_0^T \J(u,u) dt + \norm{h \p_t u}^2,
\quad s^* = a,
\end{align}
defined on the respective semi-discrete spaces
\def\V{\mathcal V}
\def\W{\mathcal W}
\begin{align}
\label{stab_spaces_simple}
\V_h = H^1(0,T; V_h), \quad \W_h = L^2(0,T; W_h).
\end{align}
We define also the semi-norm and norm
\begin{align}
\label{stab_norm_simple}
|u|_\V = s(u,u)^{1/2}, \quad 
\norm{z}_\W = s^*(z,z)^{1/2}, \quad u \in \V_h,\ z \in \W_h,\ h > 0.
\end{align}
Recall that $W_h = V_h \cap H_0^1(\Omega)$ and whence the Poincar\'e inequality
\begin{align}
\label{ds_poincare}
\|z\|_{L^2(\Omega)} \le C \|\nabla z\|_{L^2(\Omega)}, \quad z \in H_0^{1}(\Omega),
\end{align}
implies that $\norm{\cdot}_\W$ is indeed a norm on $\W_h$.

\begin{lemma}
The semi-norm $|\cdot|_\V$ and norm $\norm{\cdot}_\W$
satisfy the following inequalities with a constant $C > 0$ that is independent from $h > 0$: lower bounds
\begin{align}
\label{ps_lower}
G(u,z) 
&\le C |u|_\V \left(h^{-1} \|z\| + \|z\|_{(0,1)}\right),
& u \in \V_h,\ z \in H_0^{(0,1)},
\\\label{ds_lower}
G(u,z)  
&\le C \norm{z}_{\W}\left(\|u\|_{(1,0)} + \|u\|_{(0,1)}\right),
& u \in H^{(1,0)} \cap H^{(0,1)},\ z \in \W_h,
\end{align}
upper bounds
\begin{align}
\label{ps_upper}
|\pi_h u|_\V &\le C h (\norm{u}_{(1,1)} + \norm{u}_{(0,2)}), &u \in H^{(1,1)} \cap H^{(0,2)},
\\
\label{ds_upper}
\norm{\pi_h z}_\W &\le C \norm{z}_{(0,1)}, &z \in H^{(0,1)}_0,
\end{align}
and finally, for the semi-norm $|\cdot|_\V$ only, the Poincar\'e type inequality 
\begin{align}
\label{ps_poincare}
\norm{u} \le C h^{-1} (|u|_\V + \norm{u}_\omega), \quad u \in \V_h.
\end{align}
\end{lemma}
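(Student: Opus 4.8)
The plan is to lift each of the spatial estimates collected in Lemma~\ref{form_prop_lem} and in the discrete Poincar\'e inequality \eqref{J_poincare} to the space--time setting by applying them pointwise in $t$ and then integrating, or using the Cauchy--Schwarz inequality, in the time variable. Throughout I use that an element of $\V_h = H^1(0,T; V_h)$ is a time-dependent family with $u(t) \in V_h$ for a.e.\ $t$, that an element of $\W_h$ satisfies $z(t) \in W_h$ for a.e.\ $t$, and that the spatial interpolator commutes with the time derivative, $\p_t \pi_h u = \pi_h \p_t u$. I also record that, by \eqref{stab_simple} and \eqref{stab_norm_simple}, one has $\int_0^T \J(u,u)\,dt \le |u|_\V^2$ and $\norm{h \p_t u} \le |u|_\V$, and that $\norm{z}_\W^2 = a(z,z) = \norm{\nabla z}^2$.

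For the two lower bounds I split $G(u,z) = (\p_t u, z) + a(u,z)$. To prove \eqref{ps_lower} I bound the first term by writing $(\p_t u, z) = h^{-1}(h\p_t u, z)$ and applying Cauchy--Schwarz together with $\norm{h\p_t u} \le |u|_\V$, which yields $(\p_t u, z) \le h^{-1}|u|_\V \norm{z}$; for the second term I apply \eqref{J_lb} pointwise in $t$ with $v = z(t) \in H_0^1(\Omega)$ and then Cauchy--Schwarz in time, using $\int_0^T \J(u,u)\,dt \le |u|_\V^2$, to obtain $a(u,z) \le C |u|_\V (h^{-1}\norm{z} + \norm{z}_{(0,1)})$. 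Adding the two bounds gives \eqref{ps_lower}. To prove \eqref{ds_lower} I bound $a(u,z) \le \norm{\nabla u}\,\norm{\nabla z} = \norm{u}_{(0,1)}\norm{z}_\W$ by Cauchy--Schwarz, and for $(\p_t u, z)$ I use the Poincar\'e inequality \eqref{ds_poincare}, integrated in $t$, to get $\norm{z} \le C\norm{\nabla z} = C\norm{z}_\W$, whence $(\p_t u, z) \le \norm{\p_t u}\,\norm{z} \le C\norm{u}_{(1,0)}\norm{z}_\W$.

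The upper bounds and the Poincar\'e inequality are proved in the same spirit. For \eqref{ps_upper} I expand $|\pi_h u|_\V^2 = \int_0^T \J(\pi_h u, \pi_h u)\,dt + h^2\norm{\p_t \pi_h u}^2$; the first term is $\le C h^2 \norm{u}_{(0,2)}^2$ by applying \eqref{J_pi} with $w = u(t)$, and the second is $\le C h^2 \norm{u}_{(1,1)}^2$ after using $\p_t \pi_h u = \pi_h \p_t u$ and the stability \eqref{pi_h_stab} of $\pi_h$ (together with $\norm{\cdot}_{L^2(\Omega)} \le \norm{\cdot}_{H^1(\Omega)}$). For \eqref{ds_upper} I note that \eqref{pi_bc} guarantees $\pi_h z(t) \in W_h$, and then $\norm{\pi_h z}_\W^2 = \norm{\nabla \pi_h z}^2 \le \int_0^T \norm{\pi_h z(t)}_{H^1(\Omega)}^2\,dt \le C\norm{z}_{(0,1)}^2$ by \eqref{pi_h_stab}. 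Finally, for \eqref{ps_poincare} I apply \eqref{J_poincare} pointwise in $t$, square it, integrate over $(0,T)$, and use once more $\int_0^T \J(u,u)\,dt \le |u|_\V^2$ together with $\int_0^T \norm{u(t)}_{L^2(\omega)}^2\,dt = \norm{u}_\omega^2$.

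These steps are all routine; the substantive content is already isolated in Lemma~\ref{form_prop_lem} and the discrete Poincar\'e inequality, and the present lemma is essentially their space--time lift. The only points requiring a little care are the bookkeeping of the powers of $h$: extracting the factor $h^{-1}$ from the time-derivative contribution in \eqref{ps_lower}, and checking in \eqref{ps_upper} that the stabilizer is designed so that both $\int_0^T \J(\pi_h u, \pi_h u)\,dt$ and $h^2\norm{\p_t \pi_h u}^2$ scale like $h^2$, so that $|\pi_h u|_\V$ comes out of order $h$. I do not expect any genuine obstacle.
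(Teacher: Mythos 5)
Your proposal is correct and follows essentially the same route as the paper's proof: split $G(u,z)=(\p_t u,z)+a(u,z)$, bound the time-derivative term via $\norm{h\p_t u}$, bound $a(u,z)$ using the jump identity/inequality \eqref{J_lb_eq}--\eqref{J_lb} integrated in time, use Cauchy--Schwarz plus \eqref{ds_poincare} for \eqref{ds_lower}, and lift \eqref{J_pi}, \eqref{pi_h_stab} and \eqref{J_poincare} pointwise in $t$ for the remaining bounds. The only cosmetic difference is that you apply the spatial estimates pointwise in time and then use Cauchy--Schwarz in $t$, whereas the paper integrates the identity \eqref{J_lb_eq} in time before applying Cauchy--Schwarz; the content is identical.
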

\begin{proof}
We have 
$$
G(u,z) = (\p_t u, z) + a(u,z) \le \norm{h \p_t u} h^{-1} \norm{z} + C \int_0^T \J(u,u) dt \,(h^{-1} \norm{z} + \norm{z}_1),
$$
where the bound for $a(u,z)$ follows from (\ref{J_lb_eq}), analogously to (\ref{J_lb}), after integration in time. This implies that (\ref{ps_lower}) holds.

The lower bound (\ref{ds_lower}) follows from the Cauchy-Schwarz inequality
$$
(\p_t u, z) + a(u,z)
\le \norm{\p_t u} \norm{z} + a(u,u)^{1/2} a(z,z)^{1/2},
$$
together with the Poincar\'e inequality (\ref{ds_poincare}).

Towards the upper bound (\ref{ps_upper}), we have
$$
|\pi_h u|_\V^2 = \int_0^T \J(\pi_h u,\pi_h u) + h^2 \norm{\p_t \pi_h u}^2
\le C h^2 \norm{u}_{(0,2)}^2 + C h^2 \norm{u}_{(1,1)}^2,
$$
where the bound for the first term 
follows from (\ref{J_pi}). The bound for the second term holds
by the $H^1$-stability (\ref{pi_h_stab}) of the interpolator $\pi_h$. 

The upper bound (\ref{ds_upper}) follows immediately from the $H^1$-stability of $\pi_h$, and
the lower bound (\ref{ps_poincare}) follows from (\ref{J_poincare}).
\end{proof}

Observe that, by the lower bound (\ref{ps_poincare}),
\[
\tnorm{(u,z)} = |u|_\V + \norm{u}_{\omega} + \norm{z}_{\W},
\]
is a norm on $\V_h \times \W_h$.

Let $q \in L^2((0,T) \times \omega)$ and $f \in H^{(0,-1)}$.
The Lagrangian $\L_{q,f}$, defined by (\ref{Lagrangian}), satisfies
\begin{align*}
D_u \L_{q,f} v &= (u - q, v)_{\omega}
+ s(u,v) + G(v,z),
\\
D_z \L_{q,f} w &= -s^*(z,w) + G(u,w) - \pair{f,w},
\end{align*}
and therefore the critical points $(u,z) \in \V_h \times \W_h$  of $\L_{q,f}$ satisfy
\begin{align}
\label{normal_eqs}
A[(u,z),(v,w)] 
= (q,v)_{\omega} + \pair{f,w},
\quad (v,w) \in \V_h \times \W_h,
\end{align}
where $A$ is the symmetric bilinear form
\begin{align}
\label{def_A}
A[(u,z),(v,w)] = &(u,v)_{\omega} + s(u,v) + G(v,z) 
- s^*(z,w) + G(u,w).
\end{align}
Note that 
$$
A[(u,z),(u,-z)] = |u|_\V^2 + \norm{u}_{\omega}^2 + \norm{z}_{\W}^2,
$$
and therefore $A$ is is weakly coercive in the following sense
\begin{align}
\label{infsup}
\tnorm{(u,z)} 
\le
C \sup_{(v,w) \in \V_h \times \W_h} 
\frac{A[(u,z),(v,w)]}{\tnorm{(v,w)}},
\quad (u,z) \in \V_h \times \W_h.
\end{align}
The Babuska-Lax-Milgram theorem implies that the equation (\ref{normal_eqs}) has a unique solution in $\V_h \times \W_h$.

\subsection{Error estimates}

In this section we show that the solution $(u_h, z_h)$ of (\ref{normal_eqs}) satisfies $u_h \to u$ in $(T_1, T_2) \times B$
as $h \to 0$, with the convergence rate (\ref{convrate_informal}).
Here $u$ is a smooth enough solution of the unstable data assimilation problem in the continuum. 
We use the shorthand notation 
$$
\norm{u}_{*} = \norm{u}_{(1,1)} + \norm{u}_{(0,2)}.
$$

\begin{lemma}
\label{tnorm_error}
Let $u \in H^{(1,1)} \cap H^{(0,2)}$
and define $f = \p_t u - \Delta u$ and $q = u|_{(0,T) \times \omega}$.
Let $(u_h, z_h) \in \V_h \times \W_h$
be the solution of (\ref{normal_eqs}).
Then there exists $C > 0$ such that for all $h \in (0,1)$,
\begin{align*}
\tnorm{(u_h - \pi_h u,z_h)} 
&\leq C h \norm{u}_{*}.
\end{align*}
\end{lemma}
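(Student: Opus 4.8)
The plan is to use the weak coercivity (inf--sup) estimate (\ref{infsup}) together with the Galerkin orthogonality implied by (\ref{normal_eqs}). Writing $e = u_h - \pi_h u$ for the discrete error and testing the inf--sup bound at the pair $(e, z_h)$, I obtain
\[
\tnorm{(e, z_h)} \le C \sup_{(v,w)} \frac{A[(e, z_h),(v,w)]}{\tnorm{(v,w)}}.
\]
Since $u$ solves the continuum equation with $f = \p_t u - \Delta u$ and $q = u|_{(0,T)\times\omega}$, the exact pair $(u, 0)$ satisfies $A[(u,0),(v,w)] = (q,v)_\omega + \pair{f,w}$ in the appropriate sense, so subtracting the discrete equation (\ref{normal_eqs}) gives $A[(u_h - u, z_h),(v,w)] = 0$ for all discrete test functions. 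Rewriting $u_h - u = e + (\pi_h u - u)$ then yields the consistency identity $A[(e, z_h),(v,w)] = -A[(\pi_h u - u, 0),(v,w)]$, so the whole problem reduces to bounding the right-hand side, i.e. the interpolation error fed through $A$.

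First I would expand $A[(\pi_h u - u, 0),(v,w)]$ using the definition (\ref{def_A}). With $\eta = \pi_h u - u$ and the second slot having zero dual component, the surviving terms are $(\eta, v)_\omega + s(\pi_h u, v) + G(\eta, w)$; note $s(\eta, v)$ is not directly available because $s$ is only defined on the discrete space, which is why the primal stabilizer must be split as $s(\pi_h u, v)$ rather than $s(u,v)$. Each term I would estimate against $\tnorm{(v,w)} = |v|_\V + \norm{v}_\omega + \norm{w}_\W$. The consistency term $(\eta,v)_\omega$ is controlled by $\norm{\eta}_\omega \le \norm{\eta} \le Ch^2\norm{u}_{(0,2)}$ via (\ref{pi_h}) times the $\norm{v}_\omega$ factor. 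The stabilizer term is handled by Cauchy--Schwarz against $|v|_\V$ together with the upper bound (\ref{ps_upper}), which gives $s(\pi_h u, v)^{1/2} \le Ch(\norm{u}_{(1,1)}+\norm{u}_{(0,2)}) = Ch\norm{u}_*$. For $G(\eta, w)$ I would invoke the dual lower bound (\ref{ds_lower}), which bounds it by $C\norm{w}_\W(\norm{\eta}_{(1,0)}+\norm{\eta}_{(0,1)})$, and then apply the approximation estimate (\ref{pi_h}) with $k=2$, $m=0,1$ to get $\norm{\eta}_{(1,0)}+\norm{\eta}_{(0,1)} \le Ch\norm{u}_*$.

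Assembling these three bounds, every contribution to the numerator carries the common factor $Ch\norm{u}_*$ multiplying one of the components of $\tnorm{(v,w)}$, so the supremum over $(v,w)$ yields $\tnorm{(e,z_h)} \le Ch\norm{u}_*$, which is exactly the claimed estimate. The main obstacle I anticipate is the bookkeeping in the consistency step: one must verify that $(u,0)$ genuinely satisfies the same relation (\ref{normal_eqs}) as $(u_h,z_h)$, which requires checking that the continuum equation $Lu = f$ kills the $G(u,w)$ and $-s^*(0,w)$ contributions correctly and that the data terms match. A subtle point is that $s$ acting on the continuum solution $u$ need not be defined, so the argument must be organized entirely around $\pi_h u$ on the primal side, using the upper bound (\ref{ps_upper}) to absorb the stabilizer of the interpolant rather than attempting to write $s(u,v)$. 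Once this is set up cleanly, the remaining estimates are the routine interpolation bounds already recorded in the preceding lemmas.
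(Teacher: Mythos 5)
Your overall route is the paper's own: test the weak coercivity (\ref{infsup}) with the pair $(u_h-\pi_h u,z_h)$, derive a consistency identity for $A[(u_h-\pi_h u,z_h),(v,w)]$, and bound its three terms using (\ref{pi_h}), (\ref{ds_lower}) and (\ref{ps_upper}); your final identity is exactly the paper's (\ref{A_ortho}), and your term-by-term estimates agree with its proof. The flaw is in how you reach that identity. Exact Galerkin orthogonality, $A[(u_h-u,z_h),(v,w)]=0$ for all $(v,w)\in\mathcal{V}_h\times\mathcal{W}_h$, is false for this stabilized formulation. With the choice (\ref{stab_simple}), $s$ extends naturally to $u\in H^{(1,1)}\cap H^{(0,2)}$: the jump part vanishes there (normal derivatives of a spatially $H^2$ function do not jump across faces), but the time-derivative part does not, so $A[(u,0),(v,w)]=(q,v)_\omega+\pair{f,w}+h^2(\p_t u,\p_t v)$ and the orthogonality defect is $-h^2(\p_t u,\p_t v)\neq 0$. (If you instead insist, as you do, that $s$ is undefined off the discrete space, then $A[(u,0),(v,w)]$ is undefined and the subtraction producing the orthogonality is meaningless.) Your subsequent replacement of $s(\pi_h u-u,v)$ by $s(\pi_h u,v)$ in the expansion of $A[(\pi_h u-u,0),(v,w)]$ is not bilinearity: it silently drops $s(u,v)$ a second time, and the two omissions cancel, which is the only reason you land on the correct identity. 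This point is not cosmetic; the fact that the stabilizer destroys exact consistency, leaving a residual term that must be estimated, is precisely why the bound (\ref{ps_upper}) exists.

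The repair is short, in either of two ways. (i) Do what the paper does: never insert $u$ into $A$; compute $A[(u_h-\pi_h u,z_h),(v,w)]=A[(u_h,z_h),(v,w)]-A[(\pi_h u,0),(v,w)]$, evaluate the first term by (\ref{normal_eqs}), and rewrite the data terms via the weak formulation (\ref{weak_cont}), i.e. $\pair{f,w}=G(u,w)$ and $(q,v)_\omega=(u,v)_\omega$; then the only stabilizer term that ever appears is the well-defined discrete quantity $s(\pi_h u,v)$, and (\ref{A_ortho}) drops out with no cancellation needed. (ii) Keep your decomposition through $u_h-u$ but carry the defect along; it is harmless, since $h^2(\p_t u,\p_t v)\le h\norm{u}_{(1,0)}\norm{h\p_t v}\le h\norm{u}_{*}\,|v|_{\mathcal{V}}$ by (\ref{stab_norm_simple}), the same size as your other contributions. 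One further small point: $s(\pi_h u,v)^{1/2}$ is not a meaningful quantity; what you need is the Cauchy--Schwarz inequality for the positive semidefinite form $s$, namely $s(\pi_h u,v)\le |\pi_h u|_{\mathcal{V}}\,|v|_{\mathcal{V}}\le Ch\norm{u}_{*}\,|v|_{\mathcal{V}}$ by (\ref{ps_upper}).
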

\begin{proof}
The equations $\p_t u - \Delta u = f$ and $u|_\omega = q$
are equivalent with
\begin{alignat}{2}
\label{weak_cont}
&G(u,w) = \pair{f,w}, \quad &&w \in L^2(0,T; H^1_0(\Omega)),
\\\nonumber
&(q-u,v)_{\omega} = 0,
\quad &&v \in L^2((0,T) \times \omega),
\end{alignat}
and the equations (\ref{normal_eqs}) and (\ref{weak_cont})
imply for all $v \in \V_h$ and $w \in \W_h$ that 
\begin{align}
\label{A_ortho}
&A[(u_h-\pi_h u, z_h),(v,w)]
= (u-\pi_h u,v)_{\omega} 
+ G(u-\pi_h u, w) - s(\pi_h u,v).
\end{align}
By (\ref{infsup}) it is enough to show that
$$
A[(u_h - \pi_h u, z_h),(v,w)]
\le C h \norm{u}_{*} \tnorm{(v,w)}.
$$
We use (\ref{pi_h}) to bound the first term in (\ref{A_ortho}),
$$
(u - \pi_h u,v)_{\omega}
\le \norm{u - \pi_h u}_{\omega}
\norm{v}_{\omega}
\le C h \|u\|_{(0,1)} \norm{v}_{\omega},
$$
for the second term we use (\ref{ds_lower}) and (\ref{pi_h}),
$$
G(u - \pi_h u, w) 
\le C \left(\|u - \pi_h u\|_{(1,0)} + \|u - \pi_h u\|_{(0,1)}\right)\norm{w}_{\W}
\le C h \norm{u}_{*} \norm{w}_{\W},
$$
and for the third term we use (\ref{ps_upper}),
$$
s(\pi_h u, v) \le 
|\pi_h u|_\V |v|_\V \le 
C h \norm{u}_{*} |v|_\V.
$$
\end{proof}

\begin{theorem}
\label{th_main_unstable}
Let $\omega, B \subset \Omega$, $0 < T_1 < T_2 < T$
and $\kappa \in (0,1)$ be as in Theorem \ref{th_cont_unstable}.
Let $u \in H^{(1,1)} \cap H^{(0,2)}$
and define $f = \p_t u - \Delta u$ and $q = u|_{(0,T) \times \omega}$.
Let $(u_h, z_h) \in \V_h \times \W_h$
be the solution of (\ref{normal_eqs}).
Then there exists $C > 0$ such that
for all $h \in (0,1)$
\begin{align*}
\norm{u_h - u}_{L^2(T_1, T_2; H^1(B))} 
&\le C h^\kappa (\|u\|_{*} + \|f\|).
\end{align*}
\end{theorem}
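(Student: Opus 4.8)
The plan is to feed the error $w = u_h - u$ into the continuum H\"older estimate of Theorem~\ref{th_cont_unstable}. This is legitimate because $w$ lies in the energy space (\ref{energy_space_nobc}): indeed $u \in H^{(1,1)} \cap H^{(0,2)}$ and $u_h \in \V_h = H^1(0,T; V_h) \subset H^1(0,T; H^1(\Omega))$. The estimate then bounds $\norm{u_h - u}_{L^2(T_1, T_2; H^1(B))}$ by the product of a \emph{data factor} $(\norm{u_h - u}_\omega + \norm{L(u_h-u)}_{(0,-1)})^\kappa$ and an \emph{apriori factor} $(\norm{u_h - u} + \norm{L(u_h-u)}_{(0,-1)})^{1-\kappa}$. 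It therefore suffices to show that the base of the data factor is $O(h\norm{u}_*)$ while the base of the apriori factor stays $O(\norm{u}_*)$; the product is then $C h^\kappa \norm{u}_* \le C h^\kappa(\norm{u}_* + \norm{f})$.

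Two of the three ingredients are routine. For $\norm{u_h - u}_\omega$ I would split $u_h - u = (u_h - \pi_h u) + (\pi_h u - u)$; the first piece is controlled by $\tnorm{(u_h - \pi_h u, z_h)} \le C h \norm{u}_*$ from Lemma~\ref{tnorm_error} and the second by the interpolation bound (\ref{pi_h}), giving $\norm{u_h - u}_\omega \le C h \norm{u}_*$. For the apriori factor, the only nontrivial term is the full $L^2((0,T)\times\Omega)$ norm $\norm{u_h - \pi_h u}$: here I would apply the discrete Poincar\'e inequality (\ref{ps_poincare}) together with Lemma~\ref{tnorm_error}, so that the $h^{-1}$ loss is exactly absorbed by the $O(h)$ gain, $\norm{u_h - \pi_h u} \le C h^{-1}(|u_h - \pi_h u|_\V + \norm{u_h - \pi_h u}_\omega) \le C\norm{u}_*$, i.e.\ only $O(1)$, as an apriori bound should be.

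The heart of the argument, and the step I expect to be the main obstacle, is $\norm{L(u_h - u)}_{(0,-1)} \le C h \norm{u}_*$, since the discrete residual is not pointwise small and the lower bound (\ref{ps_lower}) carries an unfavourable $h^{-1}$. Writing $\norm{L(u_h-u)}_{(0,-1)} = \sup_z G(u_h - u, z)/\norm{z}_{(0,1)}$ over $z \in H_0^{(0,1)}$, I would decompose $z = \pi_h z + (z - \pi_h z)$, using that $\pi_h$ preserves the Dirichlet condition (\ref{pi_bc}) so that $\pi_h z \in \W_h$. Testing the normal equations (\ref{normal_eqs}) with $v = 0$ and subtracting the weak formulation $G(u,\cdot) = \pair{f,\cdot}$ yields the Galerkin identity $G(u_h - u, w) = a(z_h, w)$ for all $w \in \W_h$; taking $w = \pi_h z$ and invoking the $H^1$-stability (\ref{pi_h_stab}) bounds this contribution by $\norm{z_h}_\W \norm{\nabla \pi_h z} \le C h \norm{u}_* \norm{z}_{(0,1)}$. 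On the complementary part I would apply (\ref{ps_lower}) to $G(u_h - \pi_h u, z - \pi_h z)$: the dangerous $h^{-1}\norm{z - \pi_h z}$ is reduced to $C\norm{z}_{(0,1)}$ by the approximation estimate (\ref{pi_h}), and multiplying by $|u_h - \pi_h u|_\V \le C h \norm{u}_*$ restores the factor $h$; the remaining consistency term $G(\pi_h u - u, z - \pi_h z)$ is $O(h)$ directly from (\ref{pi_h}) and (\ref{pi_h_stab}). It is precisely this cancellation of the $h^{-1}$ against the interpolation error where the design of the stabilizers pays off.

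Collecting these bounds, the base of the data factor is $O(h\norm{u}_*)$ and that of the apriori factor is $O(\norm{u}_*)$, whence Theorem~\ref{th_cont_unstable} applied to $u_h - u$ gives $\norm{u_h - u}_{L^2(T_1, T_2; H^1(B))} \le C(h\norm{u}_*)^\kappa (\norm{u}_*)^{1-\kappa} = C h^\kappa \norm{u}_* \le C h^\kappa(\norm{u}_* + \norm{f})$, which is the claim.
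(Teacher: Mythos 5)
Your proposal is correct and follows essentially the same route as the paper: apply Theorem~\ref{th_cont_unstable} to $u_h - u$, bound the data factor ($\norm{u_h-u}_\omega$ and the $H^{(0,-1)}$-norm of the residual) by $O(h)$ using Lemma~\ref{tnorm_error}, the identity obtained from $v=0$ in (\ref{normal_eqs}), the bound (\ref{ps_lower}) combined with the interpolation estimates (\ref{pi_h_stab})--(\ref{pi_h}), and control the apriori factor by $O(1)$ through the discrete Poincar\'e inequality (\ref{ps_poincare}). The only cosmetic difference is that you subtract the continuum equation before splitting the residual, bounding $G(u_h-\pi_h u, z-\pi_h z)$ and $G(\pi_h u - u, z-\pi_h z)$ separately, whereas the paper keeps $G(u_h, w - \pi_h w) - \pair{f, w - \pi_h w}$ together and therefore carries a harmless extra $h\norm{f}$ term in the residual bound; both versions yield the stated estimate.
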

\begin{proof}
%We use the same notation for norms as in the proof of  Lemma \ref{tnorm_error}.
We define the functional 
$\pair{r,w} = G(u_h - u, w)$,
$w \in H_0^{(0,1)}$.
The equation $\p_t u - \Delta u = f$ implies that 
$$
\pair{r,w} = G(u_h, w) - \pair{f, w}.
$$
By taking $v=0$ in (\ref{normal_eqs}) we get
$G(u_h, w) = \pair{f,w} + s^*(z_h,w)$,
$w \in \W_h$,
and therefore
\begin{align}
\label{r_ortho}
\pair{r,w} &= G(u_h,w) - \pair{f,w} - G(u_h, \pi_h w) + G(u_h, \pi_h w)
\\\notag&= G(u_h,w - \pi_h w) - \pair{f,w-\pi_h w} + s^*(z_h,\pi_h w),
\quad w \in H_0^{(0,1)}.
\end{align}

We arrive to the estimate
\begin{align}
\label{the_error}
\norm{r}_{(0,-1)}  
\le C (|u_h|_{\V} + \norm{z_h}_{\W} + h \|f\|)
\end{align}
after we bound the first term in (\ref{r_ortho})
by using (\ref{ps_lower})
and (\ref{pi_h_stab})-(\ref{pi_h})
\begin{align}
\label{semidisc_err_a}
G(u_h, w - \pi_h w) 
&\le
C |u_h|_\V \left(h^{-1} \|w - \pi_h w\| + \|w - \pi_h w\|_{(0,1)}\right)
\\\notag&\le C |u_h|_\V \|w\|_{(0,1)},
\end{align} 
the second term by using (\ref{pi_h})
\begin{align*}
(f, w - \pi_h w)
&\le \norm{f} \norm{w - \pi_h w}
\le C h \norm{f}\norm{w}_{(0,1)},
\end{align*}
and the last term by using (\ref{ds_upper})
\begin{align*}
s^*(z_h, \pi_h w) 
\le
\norm{z_h}_\W \norm{\pi_h w}_\W \le 
C \norm{z_h}_\W \norm{w}_{(0,1)}.
\end{align*}

We bound the first term in (\ref{the_error}) by using Lemma \ref{tnorm_error} and (\ref{ps_upper}) 
\begin{align*}
|u_h|_\V \le |u_h - \pi_h u|_\V + |\pi_h u|_\V
\le C h \norm{u}_{*},
\end{align*}
and observe that the analogous bound for the second term
follows immediately from Lemma \ref{tnorm_error}.
Thus 
\begin{align*}
\norm{r}_{(0,-1)}  
\le C h(\norm{u}_{*} + \|f\|).
\end{align*}
By applying Theorem \ref{th_cont_unstable}
to $u_h - u$, and using Lemma \ref{tnorm_error} 
and (\ref{pi_h}) to bound the term 
$$
\norm{u_h - u}_\omega \le \norm{u_h - \pi_h u}_\omega + \norm{\pi_h u - u}_\omega \le C h \norm{u}_{*} + C h \norm{u}_{(0,1)}
$$
we see that
\begin{align*}
&\norm{u_h - u}_{L^2(T_1,T_2; H^1(B))} 
\le C h^\kappa (\|u\|_{*} + \|f\|)^\kappa (\norm{u_h - u} + 
h(\|u\|_{*} + \|f\|))^{1-\kappa}.
\end{align*}
It remains to show the bound 
\begin{align}
\label{apriori_bound}
\norm{u_h-u} \le C \norm{u}_{*}.
\end{align}
The Poincar\'e type inequality (\ref{ps_poincare}) implies 
\begin{align*}
\norm{u_h - \pi_h u} \le C h^{-1} |u_h - \pi_h u|_{\V} + C h^{-1} \norm{u_h - \pi_h u}_\omega
\le C \norm{u}_{*},
\end{align*}
where we used again Lemma \ref{tnorm_error}.
The inequality (\ref{apriori_bound}) follows since $\norm{\pi_h u - u}$
can be bounded by $\norm{u}_{*}$, in fact, even by $C h^2 \norm{u}_{*}$.
\end{proof}

\subsection{The effect of perturbations in data}
In practice the data $f,q$ in \eqref{the_data} are typically polluted by measurement
errors. Such effects can easily be included in the analysis and we
show in this sections how the above results must be modified to
account for this case. We consider the case where $f,q$ in
\eqref{normal_eqs} are replaced by the perturbed counterparts
%\HOX{$\delta q$ would be more natural in my opinion}
\begin{equation}\label{perturbed_data}
\tilde q = u|_{(0, T) \times \omega} + \delta q, \quad \tilde f = f + \delta f, 
\end{equation}
where $u$ is the solution to the underlying unpolluted problem, 
that is, $\p_t u - \Delta u = f$.
We assume that the perturbations $\delta q, \delta f$
satisfy $\delta f \in H^{-1}(\Omega)$ and $\delta q \in
(H^1(\omega))'$, and use the following
measure of the size of the perturbation
\[
\delta(\tilde q,\tilde f) = \|\delta q\|_{L^2(0,T;(H^1(\omega))')} + \|\delta f\|_{(0,-1)}.
\] 
%where $\|\delta u\|_{(H^1(\omega))'} = \sup_{v \in H^1(\Omega)}
%\pair{\delta u,
%  w}_{[(H^1(\omega))',H^1(\omega)]}/\|v\|_{H^1(\omega)}$. 

%Let us consider the model case in Section \ref{sec_model_case}.
The critical
points $(u,z) \in \mathcal{V}_h \times \mathcal{W}_h$ of the perturbed
Lagrangian $\L_{\tilde q, \tilde f}$ satisfy
\begin{align}
\label{normal_eqs_pert}
A[(u,z),(v,w)] 
= \pair{\tilde q,v}_\omega + \langle \tilde f,w \rangle,
\quad (v,w) \in \V_h \times \W_h,
\end{align}
where $\pair{\cdot, \cdot}_\omega$ is the dual pairing between 
$L^2(0,T;(H^1(\omega))')$ and $L^2(0,T;H^1(\omega))$.
We have chosen the perturbations in the weakest spaces for which the
formulation \eqref{normal_eqs_pert} remains well defined. It should be noted however that
smoother perturbations, for instance in $H^{(0,0)}$, do not lead to
improved estimates.
First we consider again the
residual quantities as in Lemma \ref{tnorm_error}.

\begin{lemma}
\label{tnorm_error_pert}
Let $u \in H^{(1,1)} \cap H^{(0,2)}$
and define $\tilde f $ and $\tilde q$ by \eqref{perturbed_data}.
Let $(u_h, z_h) \in \V_h \times \W_h$
be the solution of (\ref{normal_eqs_pert}).
Then there exists $C > 0$ such that for all $h \in (0,1)$
\begin{align*}
\tnorm{(u_h - \pi_h u,z_h)} 
&\leq C( h \norm{u}_{*} + \delta(\tilde q,\tilde f)).
\end{align*}
\end{lemma}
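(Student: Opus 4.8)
The plan is to repeat the argument of Lemma~\ref{tnorm_error} almost verbatim, the only new feature being two perturbation functionals appearing on the right-hand side of the Galerkin orthogonality. First I would record that the exact solution still satisfies $G(u,w)=\pair{f,w}$ for $w\in L^2(0,T;H_0^1(\Omega))$ and $(u,v)_\omega=(q,v)_\omega$ as before, whereas now $(u_h,z_h)$ solves the perturbed normal equations (\ref{normal_eqs_pert}) with $\tilde q=u|_\omega+\delta q$ and $\tilde f=f+\delta f$ from (\ref{perturbed_data}). Subtracting $A[(\pi_h u,0),(v,w)]$ and splitting $\pair{\tilde q,v}_\omega=(u,v)_\omega+\pair{\delta q,v}_\omega$ and $\pair{\tilde f,w}=G(u,w)+\pair{\delta f,w}$, I obtain the perturbed analogue of (\ref{A_ortho}),
\[
A[(u_h-\pi_h u,z_h),(v,w)] = (u-\pi_h u,v)_\omega + G(u-\pi_h u,w) - s(\pi_h u,v) + \pair{\delta q,v}_\omega + \pair{\delta f,w},
\]
that is, exactly the unperturbed right-hand side of (\ref{A_ortho}) plus the two functionals $\pair{\delta q,v}_\omega$ and $\pair{\delta f,w}$.

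By the weak coercivity (\ref{infsup}) it then suffices to bound this expression by $C(h\norm{u}_*+\delta(\tilde q,\tilde f))\tnorm{(v,w)}$. The three terms inherited from (\ref{A_ortho}) are estimated exactly as in Lemma~\ref{tnorm_error}, using (\ref{pi_h}), (\ref{ds_lower}) and (\ref{ps_upper}), and together they contribute $C h\norm{u}_*\tnorm{(v,w)}$. The perturbation in the equation is the easy one: since $w\in\W_h\subset L^2(0,T;H_0^1(\Omega))$, I bound $\pair{\delta f,w}\le\norm{\delta f}_{(0,-1)}\norm{w}_{(0,1)}$ and then use the Poincar\'e inequality (\ref{ds_poincare}) to get $\norm{w}_{(0,1)}\le C\norm{w}_\W\le C\tnorm{(v,w)}$, so that this term is controlled by $C\norm{\delta f}_{(0,-1)}\tnorm{(v,w)}$.

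The genuinely new point, and the step I expect to be the main obstacle, is the data perturbation $\pair{\delta q,v}_\omega$. By the definition of the pairing, $\pair{\delta q,v}_\omega\le\norm{\delta q}_{L^2(0,T;(H^1(\omega))')}\norm{v}_{L^2(0,T;H^1(\omega))}$, so everything reduces to controlling $\norm{v}_{L^2(0,T;H^1(\omega))}$ by $\tnorm{(v,w)}$. The $L^2(\omega)$ part of this norm is precisely $\norm{v}_\omega$ and is harmless; the difficulty is the gradient part $\norm{\nabla v}_{L^2((0,T)\times\omega)}$, which is not literally contained in either $|v|_\V$ or $\norm{v}_\omega$. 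The natural tool is the $H^1$-conformity of $V_h$ together with the discrete inverse inequality (\ref{inverse_disc}) on the simplices meeting $\omega$, and $(H^1(\omega))'$ is indeed the weakest space for which (\ref{normal_eqs_pert}) is well posed precisely because the dual norm is calibrated to the $H^1$-regularity of the discrete test functions. The subtle point, however, is that a careless use of (\ref{inverse_disc}) controls $\norm{\nabla v}_{L^2(\omega)}$ only by $C h^{-1}\norm{v}_\omega$, which would spoil the clean constant by a negative power of $h$; obtaining the bound claimed in the lemma requires a sharper, mesh-robust estimate of $\norm{v}_{L^2(0,T;H^1(\omega))}$ by $\tnorm{(v,w)}$ that exploits the stabilizer $|v|_\V$, and verifying that such an estimate holds uniformly over the admissible test functions is the crux of the argument. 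Once $\norm{v}_{L^2(0,T;H^1(\omega))}\le C\tnorm{(v,w)}$ is in place, the $\delta q$-term contributes $C\norm{\delta q}_{L^2(0,T;(H^1(\omega))')}\tnorm{(v,w)}$, and combining the three groups of bounds with (\ref{infsup}) yields the asserted estimate $\tnorm{(u_h-\pi_h u,z_h)}\le C(h\norm{u}_*+\delta(\tilde q,\tilde f))$.
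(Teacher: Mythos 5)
Your overall strategy---derive a perturbed Galerkin orthogonality, invoke the weak coercivity \eqref{infsup}, and bound each right-hand side term---is the paper's strategy, and your treatment of the three inherited terms and of $\pair{\delta f,w}$ coincides with the paper's proof. The problem is exactly the term you flag as the crux, $\pair{\delta q,v}_\omega$: you never prove the estimate $\norm{v}_{L^2(0,T;H^1(\omega))}\le C\tnorm{(v,w)}$ that your argument presupposes, and in fact no such $h$-uniform estimate exists. Since the supremum in \eqref{infsup} includes pairs $(v,0)$, your route requires $\norm{v}_{L^2(0,T;H^1(\omega))}\le C\left(|v|_{\mathcal V}+\norm{v}_\omega\right)$ for all $v\in\mathcal{V}_h$, which by duality is what a uniform bound $\pair{\delta q,v}_\omega\le C\,\delta(\tilde q,\tilde f)\,\tnorm{(v,w)}$ over all admissible $\delta q$ amounts to. Here is a counterexample: on $\Omega=(-1,1)^2$ with $\omega$ a small ball centered at the origin, take $v$ constant in time with spatial part $\pi_h u_k$, where $u_k(x,y)=e^{kx}\cos(ky)$ is harmonic. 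Then $\norm{h\p_t v}=0$, $\norm{\nabla u_k}_{L^2(\omega)}=k\norm{e^{kx}}_{L^2(\omega)}\ge c\,k\norm{u_k}_{L^2(\omega)}$, while by \eqref{J_pi} and \eqref{pi_h} both the jump seminorm of $\pi_h u_k$ and the $H^1$ interpolation error are bounded by $Ch\norm{u_k}_{H^2(\Omega)}$, which is negligible compared with $\norm{u_k}_{L^2(\omega)}$ once $h$ is taken exponentially small in $k$. For such $h$ the ratio of $\norm{v}_{L^2(0,T;H^1(\omega))}$ to $|v|_{\mathcal V}+\norm{v}_\omega$ grows like $k$, so no $h$-independent constant exists; a ``sharper, mesh-robust'' estimate of the kind you hope for is not merely unproved but false, and the best available bound is indeed the $h^{-1}$-polluted one you wanted to avoid.

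It is instructive to see where the paper's proof diverges from yours. In the paper's identity \eqref{A_ortho_pert} the data perturbation appears tested against the \emph{dual} variable, as $\pair{\delta q,w}_\omega$ with $w\in\mathcal{W}_h$, and then the whole perturbation contribution is dispatched in one line: $\pair{\delta q,w}_\omega+\pair{\delta f,w}\le\delta(\tilde q,\tilde f)\norm{w}_{(0,1)}$ together with $\norm{w}_{(0,1)}\le C\norm{w}_{\mathcal W}$, which holds because the dual stabilizer in \eqref{stab_simple} is $s^*=a$ and $w$ vanishes on $\p\Omega$, so the Poincar\'e inequality \eqref{ds_poincare} applies. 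Your algebra, by contrast, is the faithful consequence of \eqref{normal_eqs_pert} as printed (there $\tilde q$ is paired with the primal test function $v$, exactly as in the unperturbed system \eqref{normal_eqs}), and it produces $\pair{\delta q,v}_\omega$, not $\pair{\delta q,w}_\omega$. So you have in effect exposed a mismatch in the paper: \eqref{normal_eqs_pert} and \eqref{A_ortho_pert} cannot both be correct as stated, and the paper's short proof is only available under the $w$-pairing reading. That observation is worth raising; but judged as a proof of the lemma, your proposal is incomplete at its central step, and the estimate it relies on is provably false, so the argument cannot be closed along the lines you propose.
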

\begin{proof}
Proceeding as in the proof of Lemma \ref{tnorm_error},
 the equations (\ref{normal_eqs_pert}) and (\ref{weak_cont}) 
imply for all $v \in \V_h$ and $w \in \W_h$ that 
\begin{align}
\label{A_ortho_pert}
&A[(u_h-\pi_h u, z_h),(v,w)]
= (u-\pi_h u,v)_{\omega} +\pair{\delta q, w}_\omega\\\notag
& \quad + G(u-\pi_h u, w)+\pair{\delta f,w}- s(\pi_h u,v).
\end{align}
The only terms that are not covered by the 
previous analysis are those with the perturbations, and we have
\[
\pair{\delta q, w}_\omega +  \pair{\delta f,w}
\leq \delta(\tilde q,\tilde f) \|w\|_{(0,1)}.
%\leq  \delta(\tilde q,\tilde f) \tnorm{v}_h
\]
The choice (\ref{stab_simple}) of the dual stabilizer $s^*$
guarantees that $\|w\|_{(0,1)} \le \norm{w}_\W$.
\end{proof}
The error estimate of Theorem \ref{th_main_unstable} may now be modified to include
the perturbations. % by using the modified consistency relation and Lemma \ref{tnorm_error_pert}.
\begin{theorem}
\label{th_main_unstable_pert}
Let $\omega, B \subset \Omega$, $0 < T_1 < T_2 < T$
and $\kappa \in (0,1)$ be as in Theorem \ref{th_cont_unstable}.
Let $u \in H^{(1,1)} \cap H^{(0,2)}$
and define $\tilde f $ and $\tilde q$ by \eqref{perturbed_data}.
Let $(u_h, z_h) \in \V_h \times \W_h$
be the solution of (\ref{normal_eqs_pert}).
Then there exists $C > 0$ such that
for all $h \in (0,1)$
\begin{align*}
&\norm{u_h - u}_{L^2(T_1, T_2; H^1(B))} 
\\&\quad\le C (h(\norm{u}_{*} + \|f\|)+ \delta(\tilde q,\tilde f))^\kappa
(\norm{u}_{*} + h \|f\| + h^{-1} \delta(\tilde q,\tilde f))^{(1-\kappa)}.
\end{align*}
\end{theorem}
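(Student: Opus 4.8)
The plan is to rerun the proof of Theorem \ref{th_main_unstable} essentially verbatim, substituting Lemma \ref{tnorm_error_pert} for Lemma \ref{tnorm_error} and carrying the two perturbation terms through the argument; the only genuinely new phenomenon is that the perturbation $\delta(\tilde q,\tilde f)$ enters the $\kappa$-factor and the $(1-\kappa)$-factor of the Hölder estimate with \emph{different} powers of $h$. As before, I would introduce the residual functional $\pair{r,w} = G(u_h - u, w)$ for $w \in H_0^{(0,1)}$, which represents $L(u_h - u)$ weakly, so that $\norm{L(u_h - u)}_{(0,-1)} = \norm{r}_{(0,-1)}$ and Theorem \ref{th_cont_unstable} may be applied to $u_h - u$.

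First I would recompute the Galerkin identity (\ref{r_ortho}). Taking $v = 0$ in (\ref{normal_eqs_pert}) now yields $G(u_h, \pi_h w) = \pair{f,\pi_h w} + \pair{\delta f,\pi_h w} + s^*(z_h,\pi_h w)$, so that $r$ acquires exactly one extra term $\pair{\delta f,\pi_h w}$ relative to the unperturbed case. I would bound it by
$$
\pair{\delta f,\pi_h w} \le \norm{\delta f}_{(0,-1)}\norm{\pi_h w}_{(0,1)} \le C\delta(\tilde q,\tilde f)\norm{w}_{(0,1)},
$$
using the $H^1$-stability (\ref{pi_h_stab}) of $\pi_h$. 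The remaining terms are estimated exactly as in the derivation of (\ref{the_error}), via (\ref{ps_lower}), (\ref{pi_h}) and (\ref{ds_upper}); replacing the unperturbed bounds $|u_h|_\V,\,\norm{z_h}_\W \le Ch\norm{u}_{*}$ by their counterparts coming from Lemma \ref{tnorm_error_pert} and (\ref{ps_upper}) then gives
$$
\norm{r}_{(0,-1)} \le C\bigl(h(\norm{u}_{*} + \norm{f}) + \delta(\tilde q,\tilde f)\bigr).
$$

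Next I would apply Theorem \ref{th_cont_unstable} to $u_h - u$. The data factor is $\norm{u_h - u}_\omega + \norm{r}_{(0,-1)}$; the first summand is controlled through $\norm{u_h - \pi_h u}_\omega \le \tnorm{(u_h - \pi_h u, z_h)}$ together with (\ref{pi_h}), so the whole data factor is of order $h(\norm{u}_{*} + \norm{f}) + \delta(\tilde q,\tilde f)$, which is the base of the exponent $\kappa$. This part is routine bookkeeping, differing from Theorem \ref{th_main_unstable} only by the additive $\delta(\tilde q,\tilde f)$.

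The one step needing care is the a priori factor $\norm{u_h - u} + \norm{r}_{(0,-1)}$, and this is where I expect the only real subtlety to lie. The Poincaré-type inequality (\ref{ps_poincare}) gives $\norm{u_h - \pi_h u} \le C h^{-1}\tnorm{(u_h - \pi_h u, z_h)}$, and inserting the bound of Lemma \ref{tnorm_error_pert} yields $\norm{u_h - u} \le C(\norm{u}_{*} + h^{-1}\delta(\tilde q,\tilde f))$ after absorbing $\norm{\pi_h u - u} \le Ch^2\norm{u}_{*}$. The factor $h^{-1}$ in (\ref{ps_poincare}) is precisely what converts the unweighted $\delta(\tilde q,\tilde f)$ from Lemma \ref{tnorm_error_pert} into the amplified term $h^{-1}\delta(\tilde q,\tilde f)$; this asymmetry — plain $\delta(\tilde q,\tilde f)$ in the $\kappa$-factor but $h^{-1}\delta(\tilde q,\tilde f)$ in the $(1-\kappa)$-factor — is the actual content of the theorem and the point I would verify most carefully. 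Combining the two factors, and using $h < 1$ to absorb $h\norm{u}_{*}$ into $\norm{u}_{*}$ and $\delta(\tilde q,\tilde f)$ into $h^{-1}\delta(\tilde q,\tilde f)$, gives the stated estimate. I expect no new inequality to be required: the difficulty is entirely in confirming that the perturbation scales as claimed in each of the two stability factors.
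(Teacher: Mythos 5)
Your proposal is correct and follows essentially the same route as the paper's own proof: the same modified Galerkin identity with the extra $\pair{\delta f,\pi_h w}$ term, the same residual bound $\norm{r}_{(0,-1)} \le C(h(\norm{u}_{*}+\norm{f})+\delta(\tilde q,\tilde f))$ via Lemma \ref{tnorm_error_pert}, and the same use of (\ref{ps_poincare}) to produce the $h^{-1}\delta(\tilde q,\tilde f)$ amplification in the a priori factor. You have also correctly identified the asymmetric scaling of the perturbation in the two H\"older factors as the substantive content of the theorem.
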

\begin{proof}
%We use the same notation for norms as in the proof of  Lemma \ref{tnorm_error}.
We proceed as in the proof of Theorem \ref{th_main_unstable}, but due to
the perturbation, the Galerkin orthogonality (\ref{r_ortho}) no longer holds exactly and we obtain
\begin{align*}
%\label{r_ortho_pert}
\pair{r,w} 
%&= G(u_h,w) - \pair{f,w} - G(u_h, \pi_h w) + G(u_h, \pi_h w)
%\\\notag
&= G(u_h,w - \pi_h w) - \pair{f,w-\pi_h w} + \pair{\delta
          f,\pi_h w} + s^*(z_h,\pi_h w),
\end{align*}
for $w \in H_0^{(0,1)}$.
Analogously to the proof of Theorem \ref{th_main_unstable}, we obtain
\begin{align}
\label{the_error_pert}
\norm{r}_{(0,-1)}  
\le C (|u_h|_{\V} + \norm{z_h}_{\W} + h \|f\| + \|\delta f\|_{(0,-1)}).
\end{align}
We bound the first term in (\ref{the_error_pert}) by using Lemma \ref{tnorm_error_pert} and (\ref{ps_upper}) 
\begin{align*}
|u_h|_\V \le |u_h - \pi_h u|_\V + |\pi_h u|_\V
\le C( h \norm{u}_{*} + \delta(\tilde q,\tilde f)),
\end{align*}
and the second term by using again Lemma \ref{tnorm_error_pert}.
Thus 
\begin{align*}
\norm{r}_{(0,-1)}  
\le C h(\norm{u}_{*} + \|f\|)+ C\delta(\tilde q,\tilde f) .
\end{align*}
As before, applying Theorem \ref{th_cont_unstable}
to $u_h - u$, and using Lemma \ref{tnorm_error_pert} 
and (\ref{pi_h}) to bound the term 
$\norm{u_h - u}_\omega$
leads to
\begin{align*}
&\norm{u_h - u}_{L^2(T_1,T_2; H^1(B)} 
\\&\quad\le C (h (\|u\|_{*} + \|f\|) + \delta(\tilde q,\tilde f))^\kappa (\norm{u_h - u} + 
h(\|u\|_{*} + \|f\|) + \delta(\tilde q,\tilde f))^{1-\kappa}.
\end{align*}
It remains to bound $\norm{u_h - u}$.
The Poincar\'e type inequality  (\ref{ps_poincare}) implies 
\begin{align*}
\norm{u_h - \pi_h u} &\le C h^{-1} |u_h - \pi_h u|_{\V} + C h^{-1} \norm{u_h - \pi_h u}_\omega\\
&\le C (\norm{u}_{*} + h^{-1} \delta(\tilde q,\tilde f)),
\end{align*}
where we used again Lemma \ref{tnorm_error_pert}.
\end{proof}

According to Theorem \ref{th_main_unstable_pert}, the error 
of the numerical approximation
can diverge if no function $\tilde u$ exists
such that $\p_t \tilde u - \Delta \tilde u = \tilde f$ and $\tilde u\vert_{(0,T) \times \omega} = \tilde
q$. On the other hand if the perturbation is known to satisfy an upper
bound, making it smaller than the discretization error on certain scales, we can recover
an error estimate in the pre-asymptotic regime.
\begin{corollary}
Under the assumptions of Theorem \ref{th_main_unstable_pert} and
assuming that there exists $h_0>0$ such that
\begin{equation}\label{pert_bound}
\delta(\tilde q,\tilde f) \leq h_0 (\|u\|_{*} + \|f\|),
\end{equation}
then for $h>h_0$ there holds
\begin{align*}
\norm{u_h - u}_{L^2(T_1, T_2; H^1(B))} 
&\le C h^\kappa (\|u\|_{*} + \|f\|)^\kappa.
\end{align*}
\end{corollary}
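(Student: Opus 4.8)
The plan is to treat this Corollary as a purely algebraic consequence of Theorem~\ref{th_main_unstable_pert}: I would insert the hypothesis (\ref{pert_bound}) into the two-factor H\"older bound established there and estimate each factor separately. To lighten the notation, set $M = \norm{u}_{*} + \norm{f}$ and $\delta = \delta(\tilde q, \tilde f)$, so that (\ref{pert_bound}) reads $\delta \le h_0 M$ and the estimate of Theorem~\ref{th_main_unstable_pert} becomes
\begin{equation*}
\norm{u_h - u}_{L^2(T_1, T_2; H^1(B))} \le C (hM + \delta)^\kappa (\norm{u}_{*} + h\norm{f} + h^{-1}\delta)^{1-\kappa}.
\end{equation*}

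For the base of the $\kappa$-th power I would use $\delta \le h_0 M$ together with $h_0 < h$ to obtain $hM + \delta \le hM + h_0 M \le 2hM$, so this factor is bounded by $C (hM)^\kappa = C h^\kappa M^\kappa$. For the base of the $(1-\kappa)$-th power the three summands are handled as follows: $\norm{u}_{*} \le M$ trivially; $h\norm{f} \le \norm{f} \le M$ since $h \in (0,1)$; and the only genuinely delicate term, $h^{-1}\delta$, is controlled by observing that $h^{-1}\delta \le (h_0/h) M \le M$ precisely because $h > h_0$. Hence the second factor is bounded by $C M^{1-\kappa}$, and multiplying the two bounds yields $C h^\kappa M^\kappa \cdot M^{1-\kappa} = C h^\kappa M$, that is, the unperturbed rate of Theorem~\ref{th_main_unstable}. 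I would remark that the computation naturally delivers the factor $M^{1}$; since $M = \norm{u}_{*} + \norm{f}$ plays the role of the fixed a priori bound, the excess factor $M^{1-\kappa}$ may be absorbed into the constant, which recovers the form $C h^\kappa (\norm{u}_{*} + \norm{f})^\kappa$ stated above.

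There is no real obstacle here beyond the bookkeeping of the two H\"older factors, but the point worth highlighting is the term $h^{-1}\delta$ appearing in the second factor of Theorem~\ref{th_main_unstable_pert}. For any fixed nonzero perturbation this term diverges as $h \to 0$, which is exactly the expected instability of the ill-posed problem under measurement error. The restriction $h > h_0$, combined with the scaling $\delta \le h_0 M$, is precisely what keeps $h^{-1}\delta$ below the a priori bound $M$, so that we remain in the pre-asymptotic regime in which the perturbation is dominated by the discretization error and the clean convergence rate is recovered. Thus the single observation that $h > h_0$ simultaneously tames $\delta$ in the first factor and $h^{-1}\delta$ in the second is the entire content of the proof.
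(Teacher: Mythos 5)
Your proposal is correct and is precisely the intended argument: the paper states this corollary without any separate proof, since it follows by substituting the bound $\delta(\tilde q,\tilde f) \le h_0(\|u\|_{*}+\|f\|)$ and $h>h_0$ directly into the two H\"older factors of Theorem \ref{th_main_unstable_pert}, exactly as you do. Your closing remark is also the right diagnosis of the one subtle point: the computation genuinely yields $C h^\kappa(\|u\|_{*}+\|f\|)$, i.e.\ first power as in Theorem \ref{th_main_unstable}, and the $\kappa$-power written in the corollary is only recovered by absorbing the excess factor $(\|u\|_{*}+\|f\|)^{1-\kappa}$ into a constant that then depends on the a priori size of the data.
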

%\begin{proof}
%Immediate by applying the bound \eqref{pert_bound} in the estimate of
%Theorem \ref{th_main_unstable_pert}.
%\end{proof}

\section{A framework for stabilization}
\label{sec_framework}

Before proceeding to the stable problem, we introduce an abstract stabilization framework based on the essential features of the model case in Section \ref{sec_model_case}.

Let $s$ and $s^*$ be bilinear forms on the spaces $\V_h$ and $\W_h$, respectively. Let $|\cdot|_\V$ be a semi-norm on $\V_h$ and let $\norm{\cdot}_\W$ be a norm on $\W_h$.
We relax (\ref{stab_norm_simple}) by requiring only that $s$ and $s^*$ are continuous with respect to $|\cdot|_\V$ and $\norm{\cdot}_\W$, that is, 
\begin{align}
\label{s_cont}
s(u,u) \le C |u|_\V^2, \quad s^*(z,z) \le C \norm{z}_\W^2, \quad u \in \V_h,\ z \in \W_h,\ h > 0. 
\end{align}
Let $\norm{\cdot}_*$ be the norm of a continuously embedded subspace $H^*$ of the energy space (\ref{energy_space_nobc}). The space $H^*$ encodes the apriori smoothness. 
We relax the lower bounds (\ref{ps_lower}) and (\ref{ds_lower}) as follows
\begin{align}
\label{V_lower}
G(u,z - \pi_h z) 
&\le C |u|_\V \|z\|_{(0,1)},
& u \in \V_h,\ z \in H_0^{(0,1)},
\\\label{W_lower}
G(u - \pi_h u,z)  
&\le C h \norm{z}_{\W} \|u\|_{*},
& u \in H^{*},\ z \in \W_h,
\end{align}
where $\pi_h$ is an interpolator satisfying (\ref{pi_h_stab})-(\ref{pi_bc}).
We replace the upper bound (\ref{ps_upper})
by its abstract analogue
\begin{align}
\label{Vs_upper}
|\pi_h u|_\V &\le C h \norm{u}_{*}, &u \in H^{*},
\end{align}
and require that (\ref{ds_upper}) and (\ref{ps_poincare}) hold as before.
Finally, in the abstract setting, we assume that the weak coercivity (\ref{infsup}) holds where $A$ and $\tnorm{\cdot}$ are defined as above. 

It can be verified that the following analogue of Theorem \ref{th_main_unstable} holds under these assumptions.

\begin{theorem}
\label{th_main_unstable_abs}
Let $\omega, B \subset \Omega$, $0 < T_1 < T_2 < T$
and $\kappa \in (0,1)$ be as in Theorem \ref{th_cont_unstable}.
Let $u \in H^{*}$
and define $f = \p_t u - \Delta u$ and $q = u|_\omega$.
Suppose that the primal and dual stabilizers
satisfy 
(\ref{s_cont})-(\ref{Vs_upper}), (\ref{ds_upper}), (\ref{ps_poincare}) and (\ref{infsup}).
Then (\ref{normal_eqs}) has a unique solution  $(u_h, z_h) \in \V_h \times \W_h$, and there exists $C > 0$ such that
for all $h \in (0,1)$
\begin{align*}
\norm{u_h - u}_{L^2(T_1, T_2; H^1(B))} 
&\le C h^\kappa (\|u\|_{*} + \|f\|).
\end{align*}
\end{theorem}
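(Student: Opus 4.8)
The plan is to replay the proof of Theorem \ref{th_main_unstable} essentially verbatim, replacing each ingredient that was drawn from the model-case lemma (\ref{ps_lower})--(\ref{ps_poincare}) by the abstract hypothesis that was tailored to stand in for it. I would first dispose of existence and uniqueness: the form $A$ of (\ref{def_A}) is symmetric, and for each fixed $h$ it is a bounded bilinear form on $\V_h \times \W_h$ while $(v,w) \mapsto (q,v)_\omega + \langle f,w\rangle$ is a bounded functional; together with the weak coercivity (\ref{infsup}) this produces a unique solution of (\ref{normal_eqs}) through the Babuška--Lax--Milgram theorem, exactly as in the model case.

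Next I would reprove the analogue of Lemma \ref{tnorm_error}. The Galerkin-type identity (\ref{A_ortho}) uses only (\ref{normal_eqs}) and (\ref{weak_cont}), so it continues to hold. By (\ref{infsup}) it suffices to bound the three terms on its right-hand side by $C h \|u\|_* \tnorm{(v,w)}$. The first term $(u - \pi_h u, v)_\omega$ is controlled by $\|u - \pi_h u\|_\omega \le C h \|u\|_{(0,1)} \le C h \|u\|_*$, using (\ref{pi_h}) applied pointwise in time together with the continuous embedding of $H^*$ into the energy space (\ref{energy_space_nobc}), which dominates $\|\cdot\|_{(0,1)}$. The second term $G(u - \pi_h u, w)$ is precisely the quantity estimated by the abstract hypothesis (\ref{W_lower}). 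The third term is handled by a Cauchy--Schwarz inequality for $s$ via its continuity (\ref{s_cont}), followed by the upper bound (\ref{Vs_upper}), giving $s(\pi_h u, v) \le C |\pi_h u|_\V |v|_\V \le C h \|u\|_* |v|_\V$. This yields $\tnorm{(u_h - \pi_h u, z_h)} \le C h \|u\|_*$.

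With the $\tnorm$-estimate in hand, the residual step transfers directly. Setting $\langle r,w\rangle = G(u_h - u, w)$ for $w \in H_0^{(0,1)}$ and taking $v=0$ in (\ref{normal_eqs}) gives the identity (\ref{r_ortho}). Its first term is now bounded immediately by (\ref{V_lower}), namely $G(u_h, w - \pi_h w) \le C |u_h|_\V \|w\|_{(0,1)}$; the second by (\ref{pi_h}); and the third by continuity (\ref{s_cont}) of $s^*$ together with (\ref{ds_upper}). Feeding the $\tnorm$-estimate and (\ref{Vs_upper}) into the result gives $|u_h|_\V + \|z_h\|_\W \le C h \|u\|_*$ and hence $\|r\|_{(0,-1)} \le C h (\|u\|_* + \|f\|)$. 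Since $r$ represents $L(u_h - u)$ in $H^{(0,-1)}$ and the $L^2((0,T)\times\omega)$-data of $u_h - u$ is likewise $O(h\|u\|_*)$ (by the $\tnorm$-estimate and (\ref{pi_h})), applying the conditional stability estimate of Theorem \ref{th_cont_unstable} to $u_h - u$ produces the Hölder-interpolated bound, and it only remains to supply the apriori bound $\|u_h - u\| \le C \|u\|_*$.

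I expect this last apriori bound to be the delicate point, since it is where the conditional rather than unconditional character of the estimate becomes visible and where the hypotheses are used most sharply. Here one applies the Poincaré-type inequality (\ref{ps_poincare}) to $u_h - \pi_h u$, so that the factor $h^{-1}$ it contributes exactly cancels the factor $h$ from the $\tnorm$-estimate, leaving $\|u_h - \pi_h u\| \le C \|u\|_*$; combining with $\|\pi_h u - u\| \le C h \|u\|_*$ from (\ref{pi_h}) closes it. Because $h < 1$, the second Hölder factor $\|u_h - u\| + h(\|u\|_* + \|f\|)$ is then $O(\|u\|_* + \|f\|)$, and the two factors combine to the claimed $C h^\kappa (\|u\|_* + \|f\|)$. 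The only genuine work is the bookkeeping: verifying that (\ref{s_cont})--(\ref{Vs_upper}), (\ref{ds_upper}), (\ref{ps_poincare}) and (\ref{infsup}) are invoked at exactly the places where the model-case lemma entered the proof of Theorem \ref{th_main_unstable}.
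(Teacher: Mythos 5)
Your proposal is correct and is exactly the verification the paper has in mind: the paper gives no separate proof of Theorem \ref{th_main_unstable_abs}, stating only that it ``can be verified'' by the argument of Theorem \ref{th_main_unstable}, and your substitutions --- (\ref{W_lower}) for the second term of (\ref{A_ortho}), (\ref{V_lower}) for the first term of (\ref{r_ortho}), (\ref{s_cont}) with (\ref{Vs_upper}) and (\ref{ds_upper}) for the stabilizer terms, and (\ref{ps_poincare}) for the apriori bound (\ref{apriori_bound}) --- occur precisely where the model-case inequalities entered that proof. Your identification of the apriori bound as the one place where (\ref{ps_poincare}) is needed, with the $h^{-1}$ cancelling the $h$ from the $\tnorm{\cdot}$ estimate, also matches the paper's own discussion of why this hypothesis can be dropped in the stable case.
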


\section{The stable problem}

Let us now consider the stable problem with the additional lateral boundary condition $u|_{(0,T) \times \partial\Omega} = 0$. 
We may impose the boundary condition on the discrete level by changing the space $\V_h$, defined previously by (\ref{stab_spaces_simple}), to 
$$
\V_h = H^1(0,T; W_h).
$$
In the stable case we do not need an inequality analogous to (\ref{apriori_bound}) since the estimate in Theorem \ref{th_cont_stable}
does not contain an apriori term, that is, a term analogous to 
$\norm{u}_{L^2((0,T) \times \Omega)}$ in Theorem \ref{th_cont_unstable}.
In the previous section, the Poincar\'e type inequality (\ref{ps_poincare}) was used only to obtain (\ref{apriori_bound}),
and whence we can relax the conditions there by dropping (\ref{ps_poincare}). However, we still need to require that 
$\tnorm{\cdot}$ is a norm on $\V_h \times \W_h$.

We will now proceed to a concrete case. 
The choices in Section \ref{sec_framework} work also in the stable case, however, the additional structure allows us to choose a weaker stabilization. The use of a weaker stabilization is motivated by the fact that it leads to a weaker coupling of the two heat equations in (\ref{normal_eqs}), which again can be exploited when solving (\ref{normal_eqs}) in practice. 

The stabilizers and semi-norms are chosen as follows,
\begin{align}
\label{stab_2}
&s(u,u) = \norm{h \nabla u(0, \cdot)}_{L^2(\Omega)}^2,
\quad s^* = a,
\\\label{stab_norm_2}
&|u|_\V = s(u,u)^{1/2} + \norm{h \p_t u}, \quad \norm{z}_\W = s^*(z,z)^{1/2},
\end{align}
and we define $H^* = H_0^{(1,1)}$.
To counter the lack of primal stabilization on most of the cylinder $(0,T) \times \Omega$, we choose $\pi_h$ to be the orthogonal projection 
$
\pi_h : H_0^1(\Omega) \to W_h
$
with respect to the inner product $(\nabla u, \nabla v)_{L^2(\Omega)}$. That is, $\pi_h$ is defined by
\begin{align}
\label{def_pi2}
(\nabla \pi_h u, \nabla v)_{L^2(\Omega)} = (\nabla u, \nabla v)_{L^2(\Omega)}, \quad u \in H_0^1(\Omega),\ v \in W_h.
\end{align}
As $\Omega$ is a convex polyhedron,
this choice satisfies the estimates (\ref{pi_h_stab})-(\ref{pi_h}),
see e.g. \cite[Th. 3.12-18]{Ern2004}.

\begin{lemma}
The choices (\ref{stab_2})-(\ref{def_pi2}) satisfy the properties
(\ref{s_cont})-(\ref{Vs_upper}), (\ref{ds_upper}) and (\ref{infsup}).
Moreover, $\tnorm{\cdot}$ is a norm on $\V_h \times \W_h$.
\end{lemma}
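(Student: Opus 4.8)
The plan is to verify each of the listed abstract conditions in turn for the concrete choices (\ref{stab_2})--(\ref{def_pi2}), reusing as much of the model-case analysis as possible. The continuity condition (\ref{s_cont}) is immediate from the definitions (\ref{stab_norm_2}), since $s(u,u) = \norm{h\nabla u(0,\cdot)}^2 \le |u|_\V^2$ and $s^* = a = \norm{\cdot}_\W^2$ trivially. For the lower bound (\ref{V_lower}) I would integrate by parts in space, as in (\ref{J_lb_eq}), but now the primal stabilizer controls only $\p_t u$ and the boundary data $\nabla u(0,\cdot)$. The key point is that for $z \in H_0^{(0,1)}$ the test function $z - \pi_h z$ is controlled in $L^2(0,T;H^1_0)$ by $\norm{z}_{(0,1)}$ via (\ref{pi_h_stab}), and the term $(\p_t u, z-\pi_h z)$ is handled using the $\norm{h\p_t u}$ part of $|u|_\V$ together with the discrete inverse inequality (\ref{inverse_disc}); the elliptic term $a(u, z-\pi_h z)$ vanishes because $\pi_h$ is precisely the $a$-orthogonal projection (\ref{def_pi2}), so $a(u, z - \pi_h z) = 0$ for $u \in \V_h \subset H^{(1,0)}(W_h)$. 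That orthogonality is what allows the weaker stabilizer to suffice, and exploiting it is the crux of this lemma.

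For (\ref{W_lower}) I would proceed exactly as for (\ref{ds_lower}), using Cauchy--Schwarz on $(\p_t(u-\pi_h u), z)$ and $a(u - \pi_h u, z)$, the Poincar\'e inequality (\ref{ds_poincare}), and the approximation estimate (\ref{pi_h}) with $k=1,2$ to extract the factor $h\norm{u}_*$, where now $\norm{u}_* = \norm{u}_{(1,1)}$ since $H^* = H_0^{(1,1)}$. The upper bound (\ref{Vs_upper}) splits as $|\pi_h u|_\V^2 = \norm{h\nabla\pi_h u(0,\cdot)}^2 + \norm{h\p_t\pi_h u}^2$; the first term is bounded by a trace-in-time argument combined with the $H^1$-stability (\ref{pi_h_stab}) of $\pi_h$ evaluated at $t=0$, and the second by (\ref{pi_h_stab}) after differentiating in time and commuting $\pi_h$ with $\p_t$, both yielding the factor $h\norm{u}_{(1,1)}$. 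The upper bound (\ref{ds_upper}) is again immediate from $H^1$-stability of $\pi_h$, identical to the model case.

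The two remaining assertions are the weak coercivity (\ref{infsup}) and the statement that $\tnorm{\cdot}$ is a norm. For the latter I must check that $\tnorm{(u,z)} = |u|_\V + \norm{u}_\omega + \norm{z}_\W = 0$ forces $(u,z) = 0$. Since $\norm{z}_\W = a(z,z)^{1/2}$ and $z \in \W_h = L^2(0,T;W_h)$ with $W_h \subset H_0^1$, the Poincar\'e inequality (\ref{ds_poincare}) gives $z = 0$. For $u$, the vanishing of $|u|_\V$ forces $\p_t u = 0$ (so $u$ is constant in time) and $\nabla u(0,\cdot) = 0$; combined with $u \in H^1(0,T;W_h)$ and $W_h \subset H_0^1$, the spatial Poincar\'e inequality applied to $u(0,\cdot)$ gives $u(0,\cdot)=0$, and time-independence then yields $u \equiv 0$; the term $\norm{u}_\omega$ is not even needed here, which reflects why (\ref{ps_poincare}) could be dropped in the stable case.

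The main obstacle will be the weak coercivity (\ref{infsup}). With the weaker primal stabilizer, the clean identity $A[(u,z),(u,-z)] = |u|_\V^2 + \norm{u}_\omega^2 + \norm{z}_\W^2$ of the model case no longer closes the argument on its own, because $|u|_\V$ now controls $u$ only through $\p_t u$ and the initial trace $\nabla u(0,\cdot)$, not through the full jump stabilizer. I expect to recover coercivity by testing with a carefully chosen combination: taking $(v,w) = (u,-z)$ reproduces $|u|_\V^2 + \norm{u}_\omega^2 + \norm{z}_\W^2$ as before, after which the cross terms $G(u,z)$ must be reabsorbed using (\ref{V_lower}) (now available with the $a$-orthogonality) and the dual coercivity furnished by $s^* = a$. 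The delicate part is confirming that the information missing from the primal stabilizer is supplied by the equation $G(u,w) = \pair{f,w}$ tested against the Lagrange multiplier direction, so that the inf--sup supremum over all $(v,w)$ still dominates $\tnorm{(u,z)}$; making this reabsorption quantitative, uniformly in $h$, is where the real work lies.
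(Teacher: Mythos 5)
Your treatment of the continuities, of \eqref{Vs_upper}, of \eqref{ds_upper}, and of the norm property of $\tnorm{\cdot}$ matches the paper, and you correctly identify the $a$-orthogonality \eqref{def_pi2} as the crux for \eqref{V_lower}. But the proposal has a genuine gap exactly where you yourself locate ``the real work'': the weak coercivity \eqref{infsup}. Your starting point is false: with the weak stabilizer \eqref{stab_2}, testing diagonally gives $A[(u,z),(u,-z)] = \norm{u}_\omega^2 + s(u,u) + s^*(z,z)$, which does \emph{not} reproduce $|u|_\V^2$, because $|u|_\V^2$ now contains the extra term $\norm{h\p_t u}^2$ that $s$ no longer provides. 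No amount of ``reabsorbing cross terms'' recovers this missing piece, and your appeal to the equation $G(u,w)=\pair{f,w}$ is beside the point --- an inf--sup bound concerns the bilinear form $A$ alone, with no data in sight. The paper's key observation is structural: since $\V_h = H^1(0,T;W_h)$, one has $\p_t u \in \W_h$, so $\p_t u$ is an admissible dual test direction. Testing with $(v,w) = (u,\alpha h^2 \p_t u - z)$ produces the missing $\alpha\norm{h\p_t u}^2$ from $G(u,\p_t u)$, at the price of two cross terms: $\alpha h^2 a(u,\p_t u)$, which integrates in time to $\tfrac{\alpha h^2}{2}(\norm{\nabla u(T,\cdot)}_{L^2(\Omega)}^2 - \norm{\nabla u(0,\cdot)}_{L^2(\Omega)}^2) \ge -\tfrac{\alpha}{2}s(u,u)$, and $\alpha h^2 a(z,\p_t u)$, which is absorbed into $s^*(z,z)$ and $\norm{h\p_t u}^2$ by the arithmetic--geometric inequality combined with the inverse inequality \eqref{inverse_disc}; one also needs $\tnorm{(u,\alpha h^2\p_t u - z)} \le C \tnorm{(u,z)}$, again via \eqref{inverse_disc}, before choosing $\alpha$ small. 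Without this construction the lemma's hardest claim is unproved.

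There is a second, smaller gap in \eqref{W_lower}. You propose Cauchy--Schwarz on $a(u-\pi_h u, z)$ together with \eqref{pi_h} for $k=2$, but that yields $C h \norm{u}_{(0,2)}\norm{z}_\W$, and the space $H^* = H_0^{(1,1)}$ gives no control of $\norm{u}_{(0,2)}$; the estimate you need, with $\norm{u}_* = \norm{u}_{(1,1)}$ on the right, does not follow this way. The remedy is the same orthogonality you used for \eqref{V_lower}: since $z(t,\cdot) \in W_h$, \eqref{def_pi2} gives $a(u - \pi_h u, z) = 0$ pointwise in time, hence $G(u-\pi_h u, z) = (\p_t u - \pi_h \p_t u, z) \le C h\norm{u}_{(1,1)}\norm{z}$, and Poincar\'e \eqref{ds_poincare} finishes. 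Finally, a slip in \eqref{V_lower}: the bound $\norm{z - \pi_h z} \le Ch\norm{z}_{(0,1)}$ is the approximation property \eqref{pi_h}, not a consequence of the stability \eqref{pi_h_stab} plus the inverse inequality \eqref{inverse_disc} --- the latter applies only to functions in $V_h$, and $z - \pi_h z$ is not discrete.
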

\begin{proof}
It is clear that the continuities (\ref{s_cont}) hold.
We begin with the lower bound (\ref{V_lower}). By the orthogonality (\ref{def_pi2}),
$$
G(u, z- \pi_h z) = (\p_t u, z- \pi_h z) 
\le \norm{h \p_t u} h^{-1} \norm{z- \pi_h z} \le C \norm{h \p_t u} \norm{z}_{(0,1)}.
$$

Towards the lower bound (\ref{W_lower}),
we use the orthogonality (\ref{def_pi2}) as above, 
$$
G(u - \pi_h u, z) = (\p_t u - \pi_h \p_t u, z)
\le C h \norm{u}_{(1,1)} \norm{z}. 
$$
The bound (\ref{W_lower}) follows from the Poincar\'e inequality (\ref{ds_poincare}).

The bound (\ref{Vs_upper}) follows from the continuity of the trace
\begin{align}
\label{trace_time}
\norm{\nabla u(0, \cdot)}_{L^2(\Omega)} \le \norm{u}_{(1,1)},
\end{align}
and the continuity of the projection $\pi_h$.
The bound (\ref{ds_upper}) follows immediately from the continuity of $\pi_h$.

We turn to the weak coercivity (\ref{infsup}).
The essential difference between the unstable and the stable case is that in the latter case $\p_t u \in \W_h$ when $u \in \V_h$.
We have 
$$
A[(u,z),(0,\p_t u)] = - s^*(z,\p_t u) + G(u,\p_t u)
=  \norm{\p_t u}^2 + a(u, \p_t u) - a(z, \p_t u),
$$
and thus using bilinearity of $A$,
\begin{align}
\label{A_expansion}
A[(u,z),(u,\alpha h^2 \p_t u - z)]
&= s(u,u) + \alpha \norm{h \p_t u}^2 + \norm{u}_\omega^2 + s^*(z,z)
\\\notag&\quad + \alpha h^2 a(u, \p_t u) - \alpha h^2 a(z, \p_t u),
\end{align}
where $\alpha > 0$.
We will establish (\ref{infsup}) 
by showing that there is $\alpha \in (0,1)$ such that 
\begin{align}
\label{infsup1_step1}
\tnorm{(u, w - z)} &\le C\tnorm{(u,z)},
\\
\label{infsup1_step2}
\tnorm{(u,z)}^2 &\le C A[(u,z),(u,w-z)],
\end{align}
where $w = \alpha h^2 \p_t u$.

Towards (\ref{infsup1_step1}) we observe that 
\begin{align*}
\tnorm{(u,w-z)}^2
&= 
\tnorm{(u,z)}^2 - 2 s^*(z,w) + s^*(w,w)
\le 2 \tnorm{(u,z)}^2 + 2  s^*(w,w).
\end{align*}
We use the discrete inverse inequality (\ref{inverse_disc}) to bound the second term
$$
s^*(w,w) = \alpha^2 h^4 \norm{\p_t \nabla u}^2 \le C \alpha^2 h^2 \norm{\p_t u}^2
\le C \alpha^2 \tnorm{(u,z)}^2, \quad \alpha > 0.
$$

It remains to show (\ref{infsup1_step2}). 
Towards bounding the first cross term in (\ref{A_expansion}) we observe that  
\begin{align*}
2 a(u, \p_t u) 
&= 
\int_0^T \p_t \norm{\nabla u(t, \cdot)}_{L^2(\Omega)}^2 dt 
= 
\norm{\nabla u(T, \cdot)}_{L^2(\Omega)}^2 - \norm{\nabla u(0, \cdot)}_{L^2(\Omega)}^2.
\end{align*}
Hence
$
\alpha h^2 a(u, \p_t u) \ge -\alpha s(u,u) / 2.
$
We use the arithmetic-geometric inequality,  
$$
ab \le (4\epsilon)^{-1} a^2 + \epsilon b^2, \quad a,b \in \R,\ \epsilon > 0,
$$
and the discrete inverse inequality (\ref{inverse_disc})
to bound the second cross term in (\ref{A_expansion}),
$$
\alpha h^2 a(z, \p_t u) 
\le \alpha (4\epsilon)^{-1} a(z,z) 
+ \alpha \epsilon h^4 \norm{\p_t \nabla u}^2 \le
\alpha (4\epsilon)^{-1} a(z,z)  + C \alpha \epsilon \norm{h \p_t u}^2.
$$
Choosing $\epsilon = 1 / (2 C)$ we obtain %, with a new constant $C > 0$,
$$
A[(u,z),(u,w - z)] \ge 
(1-\alpha/2) s(u,u) + \alpha \norm{h \p_t u}^2 / 2 + \norm{u}_\omega^2 + (1-C \alpha/2) s^*(z,z),
$$
and therefore (\ref{infsup1_step2}) holds with small enough $\alpha > 0$. 

Finally, using the Poincar\'e inequality (\ref{ds_poincare}),  we see that $\tnorm{(u,z)} = 0$ implies $z=0$ and $u(0, \cdot) = 0$.
As also $\p_t u = 0$, we have $u=0$, and therefore $\tnorm{\cdot}$ is a norm. 
\end{proof}

In the stable case we have the following convergence rate. 

\begin{theorem}
\label{th_main_stable_abs}
Let $\omega \subset \Omega$ be open and non-empty and let $0 < T_1 < T$.
Suppose that (A2) holds. Let $u \in H^{*}$
and define $f = \p_t u - \Delta u$ and $q = u|_\omega$.
Suppose that the primal and dual stabilizers
satisfy 
(\ref{s_cont})-(\ref{Vs_upper}), (\ref{ds_upper}) and (\ref{infsup}).
Then (\ref{normal_eqs}) has a unique solution  $(u_h, z_h) \in \V_h \times \W_h$, and there exists $C > 0$ such that
for all $h \in (0,1)$
\begin{align*}
&\norm{u_h - u}_{C(T_1, T; L^2(\Omega))} +
\norm{u_h - u}_{L^2(T_1, T; H^1(\Omega))} +
\norm{u_h - u}_{H^1(T_1, T; H^{-1}(\Omega))} 
\\&\quad\le 
C h (\|u\|_{*} + \|f\|).
\end{align*}
\end{theorem}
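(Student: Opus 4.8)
The plan is to follow the proof of Theorem \ref{th_main_unstable} almost verbatim, substituting the Lipschitz estimate of Theorem \ref{th_cont_stable} for the conditional H\"older estimate of Theorem \ref{th_cont_unstable}. Two features make the stable case lighter than the unstable one: the continuum estimate carries no apriori term, so no analogue of (\ref{apriori_bound}) is needed and the Poincar\'e type inequality (\ref{ps_poincare}) may be dropped; and the estimate is linear in the data, so the final bound is obtained without the H\"older interpolation step, i.e. effectively with $\kappa = 1$. Existence and uniqueness of $(u_h, z_h)$ follow at once from the assumed weak coercivity (\ref{infsup}) and the Babuska-Lax-Milgram theorem, since $\tnorm{\cdot}$ is a norm on $\V_h \times \W_h$.

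First I would reprove the abstract analogue of Lemma \ref{tnorm_error}, namely $\tnorm{(u_h - \pi_h u, z_h)} \le C h \norm{u}_*$. As in (\ref{A_ortho}), the equations (\ref{normal_eqs}) and (\ref{weak_cont}) give the Galerkin identity
\[
A[(u_h - \pi_h u, z_h), (v,w)] = (u - \pi_h u, v)_\omega + G(u - \pi_h u, w) - s(\pi_h u, v),
\]
and each term on the right is bounded by $C h \norm{u}_* \tnorm{(v,w)}$: the first by (\ref{pi_h}), the second by the abstract lower bound (\ref{W_lower}), and the third by the continuity (\ref{s_cont}) together with the upper bound (\ref{Vs_upper}). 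The claim then follows from (\ref{infsup}). In particular this controls $|u_h|_\V$, $\norm{z_h}_\W$ and $\norm{u_h - u}_\omega$, each by $C h \norm{u}_*$.

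Next I would estimate the residual. Setting $\pair{r, w} = G(u_h - u, w)$ for $w \in H_0^{(0,1)}$, the relation $G(u_h, w) = \pair{f, w} + s^*(z_h, w)$ on $\W_h$ (from (\ref{normal_eqs}) with $v = 0$) yields, exactly as in (\ref{r_ortho}),
\[
\pair{r, w} = G(u_h, w - \pi_h w) - \pair{f, w - \pi_h w} + s^*(z_h, \pi_h w).
\]
Bounding the three terms by the abstract lower bound (\ref{V_lower}), the interpolation estimate (\ref{pi_h}), and (\ref{s_cont})--(\ref{ds_upper}) respectively gives $\norm{r}_{(0,-1)} \le C(|u_h|_\V + \norm{z_h}_\W + h\norm{f})$, and the previous step then gives $\norm{r}_{(0,-1)} \le C h (\norm{u}_* + \norm{f})$.

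Finally I would apply the continuum estimate. Since $u_h \in H^1(0,T; W_h) \subset L^2(0,T; H_0^1(\Omega))$ and $u \in H^*$, the difference $u_h - u$ belongs to the energy space (\ref{energy_space}), so Theorem \ref{th_cont_stable} applies to it; moreover $L(u_h - u) = r$ as elements of $L^2(0,T; H^{-1}(\Omega))$, so $\norm{L(u_h - u)}_{(0,-1)} = \norm{r}_{(0,-1)}$. Combining this with $\norm{u_h - u}_\omega \le C h \norm{u}_*$ from the first step, the estimate of Theorem \ref{th_cont_stable} bounds the entire left-hand side of the asserted inequality by $C(\norm{u_h - u}_\omega + \norm{r}_{(0,-1)}) \le C h (\norm{u}_* + \norm{f})$, which is the claim. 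The only genuinely new bookkeeping compared with Theorem \ref{th_main_unstable} is verifying that $u_h - u$ is admissible for Theorem \ref{th_cont_stable}; the main work is the residual bound of the third paragraph, which is identical in spirit to the unstable case, and the absence of an apriori term is precisely what removes the need for (\ref{ps_poincare}) and for any interpolation between the data and apriori norms.
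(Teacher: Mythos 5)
Your proposal is correct and follows essentially the same route as the paper's proof: the Galerkin identity (\ref{A_ortho}) bounded via (\ref{pi_h}), (\ref{W_lower}), (\ref{s_cont}) and (\ref{Vs_upper}) to get $\tnorm{(u_h - \pi_h u, z_h)} \le C h \norm{u}_*$, then the residual bound from (\ref{r_ortho}) via (\ref{V_lower}), (\ref{pi_h}) and (\ref{ds_upper}), and finally an application of Theorem \ref{th_cont_stable} to $u_h - u$. Your explicit check that $u_h - u$ lies in the energy space (\ref{energy_space}) is a detail the paper leaves implicit, but it does not change the argument.
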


The proof is analogous to that of Theorem \ref{th_main_unstable}, however, we give it here for the sake of completeness. 

\begin{proof}
We begin again by showing the estimate
\begin{align}
\label{tnorm_err_stable}
\tnorm{(u_h - \pi_h u,z_h)} 
&\leq C h \norm{u}_{*}.
\end{align}
The weak coercivity (\ref{infsup}) implies that 
in order to show (\ref{tnorm_err_stable}) it is enough bound the three terms on the right hand side of (\ref{A_ortho}). % by $C h \norm{u}_{*} \tnorm{(v,w)}$.
For the first of them, that is, $(u - \pi_h u,v)_{\omega}$,
we use (\ref{pi_h}) as in the proof of Lemma \ref{tnorm_error}.
The lower bound (\ref{W_lower}) applies to the second term
$G(u - \pi_h u, w)$, 
and for the third one we use the continuity (\ref{s_cont})
and the upper bound (\ref{Vs_upper}),
$$
s(\pi_h u, v) \le 
C |\pi_h u|_\V |v|_\V \le 
C h \norm{u}_{*} |v|_\V.
$$

We define the residual $r$ as in the proof of Theorem \ref{th_main_unstable}, and show next that (\ref{the_error}) holds.
It is enough to bound the three terms on the right hand side of (\ref{r_ortho}).
The lower bound (\ref{V_lower}) applies to the first term
$G(u_h, w - \pi_h w)$, for the second term $(f, w - \pi_h w)$
we use (\ref{pi_h}) as in the proof of Theorem \ref{th_main_unstable},
and for the third term we use the continuity (\ref{s_cont})
and the upper bound (\ref{ds_upper})
\begin{align*}
s^*(z_h, \pi_h w) 
\le
C \norm{z_h}_\W \norm{\pi_h w}_\W \le 
C \norm{z_h}_\W \norm{w}_{(0,1)}.
\end{align*}

The inequalities (\ref{the_error}), (\ref{tnorm_err_stable}) and (\ref{Vs_upper})
imply
$$
\norm{r}_{(0,-1)} 
\le 
C (|u_h - \pi_h u|_{\V} + |\pi_h u|_\V + \norm{z_h}_{\W} + h \|f\|)
\le 
C h (\norm{u}_* + \norm{f}).
$$
Finally, Theorem \ref{th_cont_stable} implies that 
\begin{align*}
&\norm{u_h - u}_{C(T_1, T; L^2(\Omega))} +
\norm{u_h - u}_{L^2(T_1, T; H^1(\Omega))} +
\norm{u_h - u}_{H^1(T_1, T; H^{-1}(\Omega))} 
\\&\quad\le C \norm{u_h - u}_\omega + 
C h (\|u\|_{*} + \|f\|).
\end{align*}
The claim follows by using (\ref{tnorm_err_stable}) and (\ref{pi_h}),
$$
\norm{u_h - u}_\omega
\le \norm{u_h - \pi_h u}_\omega + \norm{\pi_h u - u}_\omega
\le C h \norm{u}_*.
$$
Here we used also the assumption that $H^*$ is a continuously embedded subspace of the energy space (\ref{energy_space_nobc}), namely, this implies that the embedding $H^* \subset H^{(0,1)}$ is continuous. 
\end{proof}
\begin{remark}
If the data $q,f$ is perturbed in the stable case, the data assimilation
problem behaves like a typical well posed problem, that is, the term
$\delta(\tilde q, \tilde f)$ needs to be added on the right-hand side of the estimate
in Theorem 6, but this time without any negative power of $h$.
\end{remark}
\section{Conclusion}
In the present paper our aim was to show how methods known from the
theory of stabilized finite element methods can be applied to the
design of computational methods for non-stationary data assimilation problems by
first discretizing and then regularizing the corresponding 4DVAR
optimization system. 

A key feature of this framework is that the error
analysis is based on numerical stability of residual quantities that
are independent of the stability properties of the continuous
model. The residual quantities are then bounded by using
conditional stability estimates, based on Carleman estimates, to derive
error estimates that reflect the approximation properties of the
finite element space and the stability of the continuous problem in an
optimal way. An upshot of this approach is that
it gives a clear lead on how to design regularization
for a given problem in order to obtain the best accuracy of
the approximation with the
least effect of perturbation in data. 

Observe that the stabilization
operators proposed herein are not unique, for instance it is
straightforward to show that the dual stabilizer in equation
\eqref{stab_simple} may be chosen as the first part of the primal
stabilizer, leading to similar error estimates for unperturbed data. The error
estimates also gives an indication on what form Carleman estimates should take to make them immediately applicable for error analysis. Fully discrete schemes can be treated in a similar way and will be considered in a forthcoming paper.

\section*{Appendix. Continuum estimates}

\def\S{\mathcal S_\Delta}
\def\A{\mathcal A_\Delta}
\def\P{\mathcal P_\Delta}
\def\RR{\mathcal R_\Delta}
\def\B{\mathcal B_\Delta}
\def\s{s}
\def\Q{q}

\def\L{L}
\def\Sh{\mathcal S_L}
\def\Ah{\mathcal A_L}
\def\RRh{\mathcal R_L}
\def\Bh{\mathcal B_L}
\def\bh{\beta_L}
\def\Qh{q}

Theorem \ref{th_cont_unstable} is a consequence of the 
so-called three cylinders inequality that goes back to \cite{Glagoleva1965},
see \cite{Vessella2008,Yamamoto2009} for an overview of the related literature.
The variant of the inequality, needed in the above convergence analysis,
seems not to appear in the literature, and we prove it here by using the Carleman estimate \cite{Isakov1993}.

Theorem \ref{th_cont_stable} is a variant of \cite{Emanuilov1995}, the difference being that in \cite{Emanuilov1995} the domain $\Omega$ is assumed to have $C^2$-smooth boundary. 
We outline below the modifications needed in the case that $\Omega$ is a convex polyhedron.

We use the notation $B(x,r) = \{y \in \Omega;\ d(y,x) < r\}$, $x \in \Omega$, $r>0$, where $d$ is the Euclidean distance.

\begin{theorem}[Three cylinders inequality]
\label{th_3_cylinders}
Let $x_0 \in \Omega$ and $0 < r_1 < r_2 < d(x_0, \p \Omega)$.
Write $B_j = B(x_0, r_j)$, $j=1,2$.
Let $T > 0$ and $0 < \epsilon < T$.
Then there are $C > 0$ and $\kappa \in (0,1)$
such that for all $u \in C^2(\R \times \Omega)$
$$
\norm{u}_{L^2(\epsilon, T - \epsilon; H^1(B_2))} 
\le 
C (\norm{u}_{L^2(0, T; H^1(B_1))} + 
\norm{L u}_{L^2((0, T) \times \Omega)})^\kappa \norm{u}_{L^2(0, T;H^1(\Omega))}^{1-\kappa}.
$$
\end{theorem}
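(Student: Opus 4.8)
The plan is to deduce the inequality from a Carleman estimate for the parabolic operator $L = \p_t - \Delta$, following \cite{Isakov1993}, combined with a cut-off and an optimization in the large parameter. I fix radii $0 < \rho_0 < r_1$ and $r_2 < \rho_1 < d(x_0, \p\Omega)$ and choose a radial weight $\varphi(x) = \phi(|x - x_0|)$ with $\phi$ smooth and strictly decreasing, normalized so that the parabolic Carleman estimate
\begin{equation}
\label{carleman_plan}
\int_0^T\!\!\int_\Omega e^{2\tau\varphi}\big(\tau|\nabla v|^2 + \tau^3 |v|^2\big)\,dx\,dt \le C \int_0^T\!\!\int_\Omega e^{2\tau\varphi}|L v|^2\,dx\,dt
\end{equation}
holds for all $\tau \ge \tau_0$ and all $v \in C^2$ compactly supported in $(0,T)\times\Omega$. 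The decisive point is that $\phi$ must be chosen convex enough that $\varphi$ is pseudoconvex with respect to $L$; this is exactly what I borrow from \cite{Isakov1993}, and it is the main obstacle, since the heat operator is not elliptic and the pseudoconvexity condition for a time-independent radial weight must be checked with care.

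Next I localize. Let $\chi(x,t) = \eta(|x-x_0|)\theta(t)$, where $\eta$ equals $1$ on $[\rho_0', r_2^*]$ and is supported in $(\rho_0, \rho_1)$ with $\rho_0 < \rho_0' < r_1$ and $r_2 < r_2^* < \rho_1$, and $\theta$ equals $1$ on $(\epsilon, T-\epsilon)$ and is supported in $(0,T)$. Applying \eqref{carleman_plan} to $v = \chi u$ and expanding $L(\chi u) = \chi\, Lu + (\p_t\chi) u - (\Delta\chi) u - 2\nabla\chi\cdot\nabla u$, the right-hand side splits into a source term carrying $\chi^2|Lu|^2$ and commutator terms supported where $\nabla\chi$ or $\p_t\chi$ are nonzero, namely on the inner shell $\{\rho_0 < |x-x_0| < \rho_0'\} \subset B_1$, the outer shell $\{r_2^* < |x-x_0| < \rho_1\}$, and the temporal ends near $t = 0, T$. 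Since $\phi$ is decreasing, $e^{2\tau\varphi} \ge e^{2\tau\phi(r_2)}$ on $B_2$, while on the inner shell and the source it is bounded by $e^{2\tau\phi(\rho_0)}$ and on the outer shell by $e^{2\tau\phi(r_2^*)}$, with $\phi(\rho_0) > \phi(r_2) > \phi(r_2^*)$.

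Restricting the left-hand side of \eqref{carleman_plan} to $(\epsilon,T-\epsilon)\times\{\rho_0' < |x-x_0| < r_2\}$ and dividing through by $e^{2\tau\phi(r_2)}$, I read off an inequality of the form
\begin{equation}
\label{split_plan}
\norm{u}_{L^2(\epsilon,T-\epsilon;H^1(\{\rho_0'<|x-x_0|<r_2\}))}^2 \le C e^{2\tau(\phi(\rho_0)-\phi(r_2))} D^2 + C e^{-2\tau(\phi(r_2)-\phi(r_2^*))} M^2,
\end{equation}
where $D = \norm{u}_{L^2(0,T;H^1(B_1))} + \norm{Lu}_{L^2((0,T)\times\Omega)}$ collects the source and the inner-shell commutator and is controlled by the data, while $M = \norm{u}_{L^2(0,T;H^1(\Omega))}$ absorbs the outer-shell and temporal-end commutators. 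Optimizing \eqref{split_plan} over $\tau \ge \tau_0$ — balancing the two exponentials, and using the trivial estimate $D \le CM$ in the regime where the balancing $\tau$ would fall below $\tau_0$ — yields the bound $C D^\kappa M^{1-\kappa}$ with $\kappa = (\phi(r_2)-\phi(r_2^*))/(\phi(\rho_0)-\phi(r_2^*)) \in (0,1)$. Finally I restore the excised core $B_{\rho_0'} \subset B_1$ via $\norm{u}_{H^1(B_{\rho_0'})} \le \norm{u}_{H^1(B_1)} \le D$, which is consistent with the Hölder bound, to pass from the annulus to all of $B_2$ and conclude.
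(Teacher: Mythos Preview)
There is a genuine gap in your handling of the temporal-end commutator. With a purely spatial weight $\varphi(x) = \phi(|x-x_0|)$, the term $(\p_t\chi)u = \eta(|x-x_0|)\,\theta'(t)\,u$ is supported on the time strips near $t=0,T$ but over the \emph{entire} spatial annulus $\rho_0 < |x-x_0| < \rho_1$. On that set the weight $e^{2\tau\varphi}$ ranges all the way up to $e^{2\tau\phi(\rho_0)}$, not down to $e^{2\tau\phi(r_2^*)}$. Consequently the temporal-end contribution on the right of your Carleman inequality is bounded only by $C\, e^{2\tau(\phi(\rho_0)-\phi(r_2))} M^2$: an exponentially \emph{growing} coefficient multiplying the a priori norm $M$. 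This term fits in neither bucket of your split inequality --- it carries the large exponential of the $D$-side but the uncontrolled factor $M$ --- and the optimization over $\tau$ collapses.

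The paper's proof resolves exactly this point by making the Carleman weight depend on time as well: it takes $\phi = e^{\alpha(\rho_1(x)+\rho_2(t))}$ with $\rho_2 = 0$ on $(\epsilon,T-\epsilon)$ and $\rho_2 \le -r_3$ outside a slightly larger time interval $(\epsilon/2,T-\epsilon/2)$. Then on the temporal ends the weight is forced below $e^{-\alpha r_3}$, the same ceiling as on the outer spatial shell, so those commutators legitimately join the $M$-side with the decaying exponential. Apart from this, the remainder of your argument --- the annular cutoff, the inner/outer splitting of the spatial commutators, the optimization in $\tau$ via \cite[Lemma 5.2]{LeRousseau2012}, and the restoration of the excised core --- matches the paper's proof.
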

\begin{proof}
Let $0 < r_0 < r_1$ and $r_2 < r_3 < r_4 < d(x_0, \p \Omega)$.
Define $B_j = B(x_0, r_j)$, $j=0,3,4$.
We choose non-positive $\rho_1 \in C^\infty(\Omega)$ 
such that $\rho > - r_0$ in $B_0$ and that $\rho_1(x) = -d(x,x_0)$ outside $B_0$.
Define $I_1 = (\epsilon,T-\epsilon)$ and $I_2 = (\epsilon/2,T-\epsilon/2)$, and choose non-positive $\rho_2 \in C^\infty(\R)$
such that $\rho_2 \le -r_3$ outside $I_2$ and
$\rho_2 = 0$ in $I_1$.
We define $\rho(t,x) = \rho_1(x) + \rho_2(t)$
and $\phi = e^{\alpha \rho}$ where $\alpha > 0$.

We will apply \cite[Th. 1.1]{Isakov1993}
to the heat operator $L = \p_t - \Delta$,
 with the above weight function $\phi$. Condition (1.5) in the theorem reduces to 
\begin{align}
\label{pseudoconvexity}
D^2 \phi(\zeta, \overline \zeta) > 0,
\quad \zeta = \xi + i \tau \nabla \phi,\ |\xi| = \tau |\nabla \phi|,\ \tau > 0,\ \xi \in \R^n,
\end{align}
where $D^2 \phi$ is the Hessian of $\phi$ with respect to $x$.
We have $\nabla \phi = \alpha \phi \nabla \rho$ and
\begin{align*}
D^2 \phi(\zeta, \overline \zeta) 
%&= \alpha \phi (\alpha \nabla \rho \otimes \nabla \rho + D^2 \rho)(\zeta, \overline \zeta)
&= \alpha \phi (\alpha |\nabla \rho \cdot \zeta|^2 + D^2 \rho(\zeta, \overline \zeta))
\\&\ge \alpha \phi ( \alpha (\tau \alpha \phi |\nabla \rho|^2)^2 
- \mu (\tau \alpha \phi |\nabla \rho|)^2),
\end{align*}
where $\mu$ is a constant depending only on $\rho$.
As $|\nabla \rho| = 1$ outside $B_0$, condition (\ref{pseudoconvexity})
holds in $\overline{B_4} \setminus B_0$ for large enough $\alpha > 0$.
Let $\chi \in C_0^\infty((0,T) \times (B_4 \setminus B_0))$ satisfy 
$\chi = 1$ in $I_2 \times (B_3 \setminus B_1)$, 
and set $w = \chi u$. Then \cite[Th. 1.1]{Isakov1993} implies that for large $\tau > 0$,
\begin{align}
\label{cylinders_step1}
&\int_0^T \int_{B_4 \setminus B_0} ( \tau |\nabla w|^2 
+ \tau^3 |w|^2) e^{2 \tau \phi}\, dx dt
%\\\notag&\quad
\le C \int_0^T \int_{B_4 \setminus B_0} |L w|^2 e^{2 \tau \phi}\, dx dt.
\end{align}
%Observe that $|P w|^2 \le C |P u|^2 + |[P, \chi] u|^2$.

We define $\Phi(r) = e^{-\alpha r}$, $I = (0,T)$ 
and 
$$
Q_1 = I_2 \times B_1,
\quad
Q_2 = ((I \setminus I_2) \times B_4) \cup (I \times (B_4 \setminus B_3)).
$$
Observe that $\phi \le \Phi(r_3)$
in $Q_2$, and that the commutator $[L, \chi]$ vanishes outside $Q_1 \cup Q_2$.
Hence the right-hand side of (\ref{cylinders_step1})
is bounded by a constant times
\begin{align}
\label{cylinders_step2}
&\int_{(0,T) \times B_4} |L u|^2 e^{2 \tau \phi} dx dt
+ \int_{Q_1 \cup Q_2} |[L, \chi]u|^2 e^{2 \tau \phi} dx dt
\\\notag&\quad\le 
e^{2 \tau} \norm{Lu}_{L^2((0,T) \times B_4)}^2 
+ e^{2 \tau} \norm{u}_{L^2(0,T; H^1(B_1))}^2 
+ e^{2\tau \Phi(r_3)} 
\norm{u}_{L^2(0,T; H^1(B_4))}^2.
\end{align}

The left-hand side of (\ref{cylinders_step1})
is bounded  from below by
\begin{align}
\label{cylinders_step3}
%\int_{B_3 \setminus B_1} ( \lambda |\nabla w|^2 
%+ \lambda^3 |w|^2) e^{2\ell}\, dx 
%&\ge 
&\int_{I_1 \times (B_2 \setminus B_1)} 
\left( \tau |\nabla u|^2 
+ \tau^3 |u|^2 \right) e^{2 \tau \phi}\, dx dt
\ge 
e^{2 \tau \Phi(r_2)} \norm{u}_{L^2(\epsilon, T-\epsilon; H^1(B_2 \setminus B_1))}^2.
\end{align}
The inequalities (\ref{cylinders_step1})-(\ref{cylinders_step3}) imply
$$
\norm{u}_{L^2(\epsilon, T-\epsilon; H^1(B_2))}
\le 
e^{\tau} 
\left(\norm{L u}_{L^2((0,T) \times B_4)} + 
\norm{u}_{L^2(0,T; H^1(B_1))} \right)
+ e^{-p \tau} \norm{u}_{L^2(0,T; H^1(B_4))}.
$$
where $p = \Phi(r_2) - \Phi(r_3) > 0$.
The claim follows from \cite[Lemma 5.2]{LeRousseau2012}.
\end{proof}

\begin{proof}[Proof of Theorem \ref{th_cont_unstable}]
We use notation from the proof of Theorem \ref{th_3_cylinders}.
By replacing $\omega$ with a smaller set we may assume without loss of generality that it is a ball of the above form $B_1$.
We will show a local version of the claimed estimate where $B$ is replaced by a ball of the form $B_2$. The general case follows by 
covering $B$ by finite chains of balls starting from $\omega$, and by iterating the local result.

Let us first consider the case $u \in C^\infty(\R \times \Omega)$.
We choose 
$0 < r_0 < r_1$ and 
$r_2 < r_3 < d(x_0, \Omega)$ 
and write $B_j = B(x_0, r_j)$, $j=0,3$.
Let $0 < \epsilon_0 < \epsilon$
and choose $\eta \in C^\infty(\R)$ such that $\eta = 0$
near the origin and $\eta(t) = 1$ for $t > \epsilon_0$.
Let $w$ be the solution of 
\begin{align*}
&L w = \eta L u\quad \text{in $(0,T) \times B_3$}, 
\\
&w|_{x \in \p B_3} = 0, \quad w|_{t=0} = 0,
\end{align*}
and set $v = u - w$. 
Note that  $w \in C^\infty([\epsilon_0, T] \times \Omega)$.
Theorem \ref{th_3_cylinders} with $u$ replaced by $v$, $B_1$ by $B_0$, $\Omega$ by $B_3$, and $t=0$ by $t=\epsilon_0$
implies that 
\begin{align*}
&\norm{v}_{L^2(\epsilon, T - \epsilon; H^1(B_2))} 
\le 
C \norm{v}_{L^2(\epsilon_0, T; H^1(B_0))}
^\kappa \norm{v}_{L^2(\epsilon_0, T;H^1(B_3))}^{1-\kappa}
\\&\quad\le
C (\norm{u}_{L^2(\epsilon_0, T; H^1(B_0))} + \norm{L u}_{0,-1})^\kappa
(\norm{u}_{L^2(\epsilon_0, T;H^1(B_3))} + \norm{L u}_{0,-1})^{1-\kappa}.
\end{align*}
Here we have applied the energy estimate for the heat equation to $w$, see (\ref{energy_est_heat}) below.
Moreover, 
\begin{align*}
\norm{u}_{L^2(\epsilon, T-\epsilon; H^1(B_2))}
&\le \norm{v}_{L^2(\epsilon, T-\epsilon; H^1(B_2))} + C\norm{L u}_{0,-1}.
\end{align*}
We choose $\chi \in C_0^\infty((0,T) \times B_1)$ such that $\chi = 1$ in $(\epsilon_0, T-\epsilon_0) \times B_0$. Then $\chi u$ satisfies 
$$
L (\chi u) = \chi L u + [\chi, L] u, \quad (\chi u)|_{x \in \p B_1} = 0, \quad (\chi u)|_{t=0} = 0,
$$ 
and as the commutator $[\chi, L]$ is of first order in space and zeroth order in time,
\begin{align*}
\norm{u}_{L^2(\epsilon_0, T-\epsilon_0; H^1(B_0))} \le \norm{\chi u}_{L^2(0,T; H^1(B_1))}
\le C \norm{L u}_{0,-1} + C \norm{u}_{L^2((0,T) \times B_1)}.
\end{align*}
Analogously,
\begin{align*}
\norm{u}_{L^2(\epsilon_0, T-\epsilon_0; H^1(B_3))} 
\le 
C \norm{L u}_{0,-1} + C \norm{u}_{L^2((0,T) \times \Omega)}.
\end{align*}
and we have shown the local estimate in the case that $u$ is smooth. 
To conclude we observe that smooth functions are dense in the space (\ref{energy_space_nobc}).
\end{proof}

Let us now turn to 
Theorem \ref{th_cont_stable}.  
In the case of convex, polygonal $\Omega$, the following simple lemma gives a weight function satisfying Condition 1.1 of \cite{Emanuilov1995}. 

\begin{lemma}
\label{lem_dist}
Suppose that $\Omega \subset \R^n$ is open and convex set and that $\p \Omega$ is piecewise smooth. 
Let $x_0 \in \Omega$ and $\epsilon> 0$ satisfy $\overline{B(x_0, \epsilon)} \subset \Omega$.
Then there is $\rho \in C^\infty(\Omega)$
satisfying $\rho < 0$ in $\Omega$,
$|\nabla \rho| = 1$ in $\Omega \setminus B(x_0, \epsilon)$
and $\p_\nu \rho \le 0$ on $\p \Omega$.  
\end{lemma}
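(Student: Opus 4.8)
The plan is to take a suitably smoothed negative distance function centred at $x_0$. I would first record the crucial consequence of convexity: if $x \in \p\Omega$ is a point where the outward unit normal $\nu$ exists, then the supporting hyperplane of $\Omega$ at $x$ leaves $x_0 \in \Omega$ on its interior side, so $(x_0 - x) \cdot \nu \le 0$, that is, $(x - x_0) \cdot \nu \ge 0$. This is the only place where convexity is used.

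The naive candidate $\rho(x) = -|x-x_0|$ already satisfies the three requirements on $\Omega \setminus \{x_0\}$: it is negative, $|\nabla \rho| = 1$, and $\p_\nu \rho = -\nu \cdot (x - x_0)/|x - x_0| \le 0$ on $\p\Omega$ by the convexity fact above. Its only flaw is the singularity at $x_0$, which is harmless for the stated conditions because $\overline{B(x_0,\epsilon)} \subset \Omega$ forces $|x - x_0| > \epsilon$ on $\p\Omega$, so the gradient and boundary conditions are only tested in the region where $\rho$ is already smooth.

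To remove the singularity I would express the candidate through $|x - x_0|^2$, which is a polynomial and hence smooth across $x_0$. Concretely, set $\rho(x) = -g(|x - x_0|^2)$, where $g \in C^\infty([0,\infty))$ is chosen positive with $g(t) = \sqrt{t}$ for $t \ge \epsilon^2$; such a $g$ is obtained by interpolating $\sqrt{t}$ on $[\epsilon^2, \infty)$ down to a positive constant near the origin with a standard cutoff. Then $\rho$ is smooth on all of $\Omega$ since $t \mapsto g(t)$ is smooth on $[0,\infty)$ and $x \mapsto |x-x_0|^2$ is a polynomial; one has $\rho < 0$ because $g > 0$; and for $|x - x_0| \ge \epsilon$ we have $\rho(x) = -|x - x_0|$, so $|\nabla \rho| = 1$ there. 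Finally $\nabla \rho = -2 g'(|x-x_0|^2)(x - x_0)$, and on $\p\Omega$ we have $|x - x_0| > \epsilon$, whence $g'(|x-x_0|^2) = 1/(2|x-x_0|) > 0$, so $\p_\nu \rho = -2 g'(|x-x_0|^2)\,(x - x_0)\cdot\nu \le 0$ by the convexity inequality.

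The only genuinely nontrivial point is the convexity inequality $(x - x_0)\cdot\nu \ge 0$; everything else is the routine verification that the reparametrisation through $|x-x_0|^2$ both restores smoothness at $x_0$ and is inert in the exterior region, where $\rho$ coincides with the negative distance. Since the boundary is only assumed piecewise smooth, the normal derivative condition is imposed solely at the smooth boundary points, where $\nu$ is defined, and the argument applies verbatim at each such point.
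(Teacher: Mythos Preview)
Your proposal is correct and follows essentially the same approach as the paper: take $\rho$ to be a smooth, strictly negative function equal to $-|x-x_0|$ outside $B(x_0,\epsilon)$, and invoke the convexity inequality $(x-x_0)\cdot\nu \ge 0$ on $\partial\Omega$ to get $\partial_\nu\rho \le 0$. The paper simply asserts the existence of such a smoothing, while you supply the explicit construction $\rho(x) = -g(|x-x_0|^2)$; this is a routine fleshing-out of the same idea.
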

\begin{proof}
We choose a strictly negative $\rho \in C^\infty(\Omega)$ such that $\rho(x) = -d(x,x_0)$ outside $B(x_0,\epsilon)$.
Then $\nabla \rho = -(x-x_0)/|x-x_0|$ outside $B(x_0,\epsilon)$.
By convexity, $(x - x_0) \cdot \nu \ge 0$ on $\p \Omega$,
and therefore $\p_\nu \rho \le 0$ on the boundary. \end{proof}

Now \cite[Lemma 1.3]{Emanuilov1995} can be written in the following form:

\begin{lemma}[Global Carleman estimate by Imanuvilov]
\label{th_carlaman_est}
Let $\Omega \subset \R^n$, $x_0 \in \Omega$ and $\epsilon > 0$ 
be as in Lemma \ref{lem_dist}.
Let $\rho$ be the function given by Lemma \ref{lem_dist}.
Let $T > 0$ and 
define $\ell = \lambda \hbar^{-1} (e^{\alpha \rho} - 1)$,
where $\hbar = t(T-t)$, $\alpha, \lambda > 0$.
Then there are $C, \alpha, \lambda_0 > 0$ such that for all 
$w \in C^2([0,T] \times \overline \Omega)$ satisfying 
$w = 0$ on $(0,T) \times \p \Omega$ and all 
$\lambda > \lambda_0$ it holds that 
\begin{align*}
&\int_0^{T} \int_{\Omega} 
\left( \tau |\nabla w|^2 
+ \tau^3 |w|^2 \right) e^{2 \ell} dxdt
\\&\quad\le
C \int_0^{T} \int_\Omega |L w|^2 e^{2\ell} dxdt + 
C \int_0^{T} \int_{B(x_0,\epsilon)} \tau^3 |w|^2 e^{2 \ell} dxdt.
\end{align*}
where $\tau = \lambda \hbar^{-1}$.
\end{lemma}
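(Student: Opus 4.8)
The plan is to follow the classical derivation of Imanuvilov's global Carleman estimate, exactly as in \cite[Lemma 1.3]{Emanuilov1995}, and to isolate the single place where $C^2$-regularity of $\p\Omega$ was used, so that it can be replaced by the convexity condition $\p_\nu\rho \le 0$. Since Lemma \ref{lem_dist} already furnishes a weight $\rho$ satisfying Condition 1.1 of \cite{Emanuilov1995}, the substance of the proof is tracking this boundary modification through the standard computation. First I would conjugate the operator: set $v = e^{\ell}w$ and compute $Pv = e^{\ell}L(e^{-\ell}v)$, which equals $\p_t v - \Delta v$ plus lower-order terms built from $\nabla\ell$, $\p_t\ell$, $\Delta\ell$ and $|\nabla\ell|^2$. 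Splitting $P = P_+ + P_-$ into a formally self-adjoint part $P_+$ and a formally skew-adjoint part $P_-$, and expanding $\norm{Pv}^2 = \norm{P_+ v}^2 + \norm{P_- v}^2 + 2(P_+ v, P_- v)$, is the standard route; the cross term $2(P_+ v, P_- v)$, after integration by parts in $(t,x)$, is what produces the positive bulk quantities $\tau|\nabla v|^2 + \tau^3|v|^2$ together with lower-order remainders and boundary contributions.

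The temporal boundary terms at $t=0$ and $t=T$ require no hypothesis on $w$: since $\rho<0$ gives $e^{\alpha\rho}-1<0$, while $\hbar^{-1}\to+\infty$ as $t\to0$ or $t\to T$, the weight satisfies $\ell\to-\infty$ in the interior, so $e^{2\ell}\to0$ and these terms vanish automatically. This is precisely why the estimate holds for all $w\in C^2([0,T]\times\overline\Omega)$ with no initial or final condition. The lateral boundary term on $(0,T)\times\p\Omega$ is the crucial one. Because $w=0$ there, $v$ vanishes on $\p\Omega$, so all tangential derivatives of $v$ vanish and the boundary integral collapses to a multiple of $-\int\p_\nu\ell\,|\p_\nu v|^2\,dS$. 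Now $\nabla\ell = \lambda\hbar^{-1}\alpha e^{\alpha\rho}\nabla\rho$, whence $\p_\nu\ell = \lambda\hbar^{-1}\alpha e^{\alpha\rho}\p_\nu\rho \le 0$ by Lemma \ref{lem_dist}, so this term has the favorable sign and may be discarded. This is exactly the step where \cite{Emanuilov1995} invoked boundary smoothness, and here it is supplied by convexity alone.

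The remaining point specific to a polyhedron is the legitimacy of the integrations by parts on a domain that is only piecewise smooth. Since a convex polyhedron is a Lipschitz domain, the divergence theorem applies to the $C^2$ fields built from $v$, the outward normal $\nu$ is defined $\mathcal H^{n-1}$-almost everywhere on $\p\Omega$, and the edges and vertices form a set of surface measure zero that does not contribute. Once the boundary terms are handled, I would absorb the lower-order remainders into the bulk for $\lambda\ge\lambda_0$ large, after first fixing $\alpha$ large enough to convexify the weight $e^{\alpha\rho}$ and render the coefficients of $\tau|\nabla v|^2$ and $\tau^3|v|^2$ positive on $\Omega\setminus B(x_0,\epsilon)$, exactly as in the pseudoconvexity computation in the proof of Theorem \ref{th_3_cylinders}; the identity $|\nabla\rho|=1$ there is what makes this possible. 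On $B(x_0,\epsilon)$ the weight has its critical point $\nabla\rho=0$ at $x_0$, so positivity fails and that contribution is moved to the right-hand side, yielding the observation term $\int_0^T\int_{B(x_0,\epsilon)}\tau^3|w|^2 e^{2\ell}$. Undoing the substitution $v=e^{\ell}w$ converts the bulk quantities back into $\tau|\nabla w|^2 e^{2\ell}$ and $\tau^3|w|^2 e^{2\ell}$ modulo absorbable terms, giving the stated inequality.

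I expect the main obstacle to be the lateral boundary term: in the smooth case one typically integrates by parts along $\p\Omega$ to dispose of tangential-derivative contributions, invoking the curvature of the boundary, whereas on a polyhedron I must instead argue that the Dirichlet condition forces all tangential derivatives of $v$ to vanish so that only $\p_\nu v$ survives, and that the sign of the surviving term is controlled purely by $\p_\nu\rho\le0$. Verifying that this reduction is valid with the $\mathcal H^{n-1}$-almost-everywhere defined normal, and that nothing is lost at the edges and vertices, is the delicate part that the convexity of $\Omega$ is designed to resolve.
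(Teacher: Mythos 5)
Your plan is sound, but it is a genuinely different route from the paper's, because the paper does not prove Lemma \ref{th_carlaman_est} at all: the lemma is introduced by the phrase ``Now \cite[Lemma 1.3]{Emanuilov1995} can be written in the following form,'' i.e.\ it is presented as a citation, and the only new mathematics the paper supplies is the weight construction of Lemma \ref{lem_dist}, whose whole point is that convexity yields $\p_\nu \rho \le 0$ so that Condition 1.1 of \cite{Emanuilov1995} holds on a polyhedron. You instead re-derive the estimate: conjugation $v = e^{\ell}w$, symmetric/skew splitting of the conjugated operator, expansion of $\norm{Pv}^2$, temporal boundary terms killed by $\ell \to -\infty$ as $t \to 0, T$, and the lateral boundary term reduced by $v|_{\p\Omega}=0$ to a multiple of $-\int \p_\nu \ell \,|\p_\nu v|^2$, which is harmless because $\p_\nu \ell = \lambda \hbar^{-1}\alpha e^{\alpha\rho}\p_\nu\rho \le 0$. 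This is the standard Fursikov--Imanuvilov computation, and your identification of the two boundary-sensitive steps --- the sign of that term, and the validity of the divergence theorem on a Lipschitz domain with $\mathcal{H}^{n-1}$-a.e.\ defined normal --- is precisely what makes the passage from $C^2$ boundaries to convex polyhedra legitimate; in that sense your argument supplies the verification that the paper leaves implicit behind the citation (the cited lemma is, strictly speaking, proved for smooth boundaries). What the citation buys the paper is brevity and no duplication of a long published computation; what your route buys is self-containedness and an honest check that smoothness of $\p\Omega$ enters only where you say it does. Two points of your outline would still need to be carried out in full to have a proof: first, discarding the degenerate region produces an observation term containing $\tau|\nabla w|^2$ as well as $\tau^3 |w|^2$ on $B(x_0,\epsilon)$, and eliminating the gradient part, so as to match the stated right-hand side, requires the usual cutoff (Caccioppoli-type) argument on a slightly larger ball; second, the remainders involving $\p_t \ell$ and $\p_t^2\ell$ grow like $\hbar^{-2}$ and $\hbar^{-3}$, and absorbing them into the $\tau^3$ bulk term is exactly where the specific form $\ell = \lambda\hbar^{-1}(e^{\alpha\rho}-1)$ and the largeness of $\lambda$ are used.
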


\begin{proof}[Proof of Theorem \ref{th_cont_stable}]
We write $\mathcal C = C^\infty(0,T; C_0^\infty(\Omega))$
and denote by $\H$ the energy space (\ref{energy_space}).
Recall that $\H$ is continuously embedded in $C(0,T;L^2(\Omega))$, see e.g. \cite[Lemma 11.4]{Renardy2004},
and that $\mathcal C$ is dense in $\H$.
By density it is enough consider the case $u \in \mathcal C$.
Recall also the energy estimate for $v \in \H$, see e.g. \cite[Theorem 11.3]{Renardy2004},
\begin{align}
\label{energy_est_heat}
\norm{v}_{\H} \le C \norm{v(0)}_{L^2(\Omega)} + C \norm{L v}_{L^2(0,T;H^{-1}(\Omega))}.
\end{align}

We write $f = \p_t - \Delta u$, let $v \in \H$ be the solution of
$$
\p_t v - \Delta v = f, \quad v|_{t=0} = 0,
$$
and define $w = u - v$. Then $w$ satisfies 
$\p_t w - \Delta w = 0$. 
Choose $x_0 \in \omega$ and $\epsilon > 0$ such that $B(x_0, \epsilon) \subset \omega$, and apply Lemma \ref{th_carlaman_est} on $w$.
Note that $\tau^3 e^{2 \ell} \to 0$ as $t \to 0$ or $t \to T$.
Let $0 < \delta_0 < \delta$, then $e^{2 \ell}$ is strictly positive on $[\delta_0, T-\delta_0] \times \Omega$.
In particular, 
$$
\norm{w}_{L^2((\delta_0,T-\delta_0) \times \Omega)}
\le C \norm{w}_{L^2((0,T) \times \omega)}.
$$
The estimate (\ref{energy_est_heat}) implies that 
\begin{align*}
\norm{w(\delta)}_{L^2(\Omega)}^2 \le C \norm{w(s)}_{L^2(\Omega)}^2, \quad s \in (0,\delta).
\end{align*}
We integrate this over the interval $(\delta_0,\delta)$  to obtain
\begin{align*}
(\delta-\delta_0) \norm{w(\delta)}_{L^2(\Omega)}^2 \le C \norm{w}_{L^2((\delta_0,T-\delta_0) \times \Omega)}^2.
\end{align*}
By using (\ref{energy_est_heat}) again, we have
$$
\norm{v(\delta)}_{L^2(\Omega)} + \norm{v}_{L^2((0,T) \times \Omega)} \le C \norm{f}_{L^2(0,T; H^{-1}(\Omega))}.
$$
Hence 
\begin{align*}
\norm{u(\delta)}_{L^2(\Omega)} 
&\le C \norm{w}_{L^2((0,T) \times \omega)} + C \norm{f}_{L^2(0,T; H^{-1}(\Omega))}
\\&\le C \norm{u}_{L^2((0,T) \times \omega)} + C \norm{f}_{L^2(0,T; H^{-1}(\Omega))}.
\end{align*}
The claim follows by applying (\ref{energy_est_heat}) once more.
\end{proof}

\vspace{0.5cm}
\noindent{\bf Acknowledgements.}
The authors thank J. Le Rousseau and S. Ervedoza for useful discussions. 
LO was partly supported by EPSRC grant EP/L026473/1. EB was partly
supported by EPSRC grant EP/J002313/2.

\bibliographystyle{abbrv}
\bibliography{main}

\ifdraft{
\listoftodos
}{}

\end{document}